\documentclass[1 [leqno,11pt]{amsart}
\usepackage{amssymb, amsmath}
\usepackage{hyperref}
\usepackage{xcolor}
\allowdisplaybreaks[4]

\def\Z{\mathop{\mathbb Z\kern 0pt}\nolimits}
\def\N{\mathop{\mathbb N\kern 0pt}\nolimits}
\def\Q{\mathop{\mathbb Q\kern 0pt}\nolimits}
\def\R{\mathop{\mathbb R\kern 0pt}\nolimits}

\def\l{{\rm l}}

\def\l3{L^3(\R^3)}

\def\ep{\varepsilon}

\newcommand{\Rmnum}[1]{\uppercase\expandafter{\romannumeral #1} }

\newcommand{\beq}{\begin{equation}}
\newcommand{\eeq}{\end{equation}}
\newcommand{\ben}{\begin{eqnarray}}
\newcommand{\een}{\end{eqnarray}}
\newcommand{\beno}{\begin{eqnarray*}}
\newcommand{\eeno}{\end{eqnarray*}}

 \numberwithin{equation}{section}

\newtheorem{thm}{Theorem}[section]
\newtheorem{lem}{Lemma}[section]
\newtheorem{rmk}{Remark}[section]

\newtheorem{prop}{Proposition}[section]

\begin{document}

\title{Swirl ill-posedness for incompressible fluids in (sub)critical spaces}

\author[T. Barker]{Tobias Barker}
\address[T. Barker]
 {University of Bath} 
 \email{tb2130@bath.ac.uk}
\author[R. Hu]{Ruilin Hu}
\address[R. Hu]
 {University of Bath} \email{rh2488@bath.ac.uk}

\date{\today}

\begin{abstract}
For the 3D Navier-Stokes equations with sufficiently smooth initial data having zero swirl $u_{0\theta}\equiv 0$, Ladyzhenskaya \cite{L1968} and Ukhovskii and Yudovich \cite{UY1968} established global regularity. It has since remained open as to whether well-posedness still holds for smooth axisymmetric initial data, with the  initial swirl being smaller than an \textit{absolute} constant in a critical space.

In this remark, we prove \textit{norm inflation} occurs for the Navier-Stokes scalar swirl component, with initial smallness in the critical space $\dot{H}^{\frac{1}{2}}$ or even subcritical Sobolev spaces. We also obtain norm inflation for the 3D Euler swirl \textit{vector} in the Euler subcritical space $\dot{H}^3$. Our results are achieved by introducing an anisotropic scale in Luo's supercritical norm inflation construction \cite{Luo24}, as well as performing an anisotropic bootstrap argument to get a sufficiently small swirl remainder term. 
\end{abstract}
\maketitle

\vskip-0.3cm

\setcounter{equation}{0}

	\section{Introduction}
The three-dimensional incompressible Navier–Stokes and Euler equations play a fundamental role in mathematical fluid mechanics and take the form
\begin{equation}\label{eq1.1}
    \left\{\begin{array}{l}
         \partial_tu-\nu\Delta u+u\cdot\nabla u=\nabla P,\quad\forall (x,t)\in\mathbb{R}^3\times (0,T) , \\
          \operatorname{div}u=0,\quad\forall (x,t)\in\mathbb{R}^3\times (0,T),\\
          u|_{t=0}=u_0\quad\forall x\in\mathbb{R}^3.
    \end{array} \right.
\end{equation}
 Throughout this paper we will take either $\nu=1$ (Navier-Stokes equations) or $\nu=0$ (Euler equation). 

Global regularity of the unforced 3D Navier-Stokes equation remains open. A natural avenue has been to investigate regularity of these equations in the presence of certain symmetries. One example is that of axisymmetry, which is preserved by the evolution of the equation. For sufficiently smooth axisymmetric initial data with no swirl, global regularity was established by Ladyzhenskaya \cite{L1968} and independently by Ukhovskii and Yudovich \cite{UY1968}.  For axisymmetric solutions, it is natural then to ask if global regularity occurs for initial data with small enough swirl. For initial data with swirl that is small enough with respect to other components `relatively small swirl', this was accomplished in \cite{CFZ2017,L2022,LZ2017,W2016,ZZ2014}  for instance. For the 3D Euler equations, initial data with small initial swirl compared to the other axisymmetric components was shown to extend the life span of the solution in \cite{D2013}, \cite{LZ2024} and \cite{LXY26}.

From a qualitative perspective, for the Navier-Stokes equations the case of relatively small initial swirl can be reduced to the case of no swirl. In particular for all $M>0$ there exists $\varepsilon(M)>0$ such that for $H^2$ axisymmetric initial data\footnote{In \eqref{relativelysmall} and the argument that follows, the $L^2$ norm could be replaced with any norm $X$ with $\varepsilon$ depending on $M$ and $\|\cdot\|_{X}$. }  with
\begin{equation}\label{relativelysmall}
\|u_0\|_{H^{2}(\mathbb{R}^3)}\leq M\quad\textrm{and}\quad \|u_0\cdot e_{\theta}\|_{L^{2}(\mathbb{R}^3)}\leq \varepsilon(M),
\end{equation}
the associated suitable\footnote{This means a weak Leray-Hopf solution (for which definition can be seen in Definition 5.1 p.102 in \cite{S2014}) that satisfies the local energy inequality given in Definition 6.1 p.133 in \cite{S2014}.} weak Leray-Hopf solution is smooth on $\mathbb{R}^3\times (0,\infty)$. To achieve this, the same compactness arguments can be used as in \cite[First step- second step p.66]{BP2021} and \cite{B2025}. These arguments show that the non-existence of such an $\varepsilon(M)$ in \eqref{relativelysmall} yields a singular axisymmetric suitable weak Leray-Hopf solution with $H^{2}$ axisymmetric initial data without swirl, which contradicts \cite{L1968}, \cite{UY1968} and \cite{LMNP1999}.

 For axisymmetric initial data that has swirl smaller than an absolute value in critical space\footnote{A Banach $X$ is called critical space if the corresponding norm $\|\cdot\|_X$ is invariant with the Navier-Stokes scaling of initial data $u_{0\lambda}(x)=\lambda u_0(\lambda x)$.}, the situation is more delicate and unclear\footnote{See for instance a related question stated in Remark 1.1 of \cite{L2022}.}. In particular, the following problem is open:
 \begin{itemize}
 \item[]
\textbf{(Q)} For the axisymmetric Navier-Stokes equations in $\mathbb{R}^3$, does there exist a universal constant $\ep>0$ and a critical space $X$ such that if the initial data $u_0$ satisfies $$\|u_0\cdot e_\theta\|_{X}<\ep,$$ then the solution is regular globally in time?
\end{itemize}

In this paper, we show that for axisymmetric initial data instantaneous norm inflation occurs for the 3D Navier-Stokes equations on the level of the swirl in the critical space $\dot{H}^{\frac{1}{2}}$. This partially answers a question raised in Remark 1.1 (i) (end of paragraph) in \cite{L2022}. We also show that such instantaneous norm inflation even occurs in certain subcritical\footnote{A Banach $Y$ is called  a subcritical space  if under the Navier-Stokes  initial data scaling $u_{0\lambda}(x):=\lambda u_0(\lambda x)$ ($\lambda>0$), we have $\|u_{0\lambda}\|_{Y}=\lambda^{\alpha}\|u_0\|_{Y}$ for some $\alpha>0$.} spaces for the swirl of the 3D Navier-Stokes equations. For the axisymmetric 3D Euler equations,  instantaneous norm inflation is demonstrated for the vector swirl in $\dot{H}^3$. For divergence-free initial data, it is known that all components belonging to $\dot{H}^3$ ensures local well-posedness, with the solution bounded in terms of $\|u_0\|_{\dot{H}^3}$ on the time interval $[0, c\|u_0\|_{\dot{H}^3}^{-1}]$.

These results demonstrate that instability of the swirl component of axisymmetric solutions persists even in high regularity spaces,  due to strong nonlinear coupling. These results are stated precisely in the next section.

\subsection{Main results}

\subsubsection{Swirl inflation for axistmmetric solutions}\label{sec1.1.3}

In what follows `sufficiently small' means smaller than a certain absolute/universal constant.

For $x=(x_1,x_2,x_3)\in\mathbb{R}^3$, we define
\begin{equation}\label{eq1.19}
    x_1=r\cos\theta,\quad x_2=r\sin\theta, \quad x_3=z,
\end{equation}
and the orthogonal unit vectors
\begin{equation}\label{eq1.20}
    e_r=(\cos\theta,\sin\theta,0),\quad e_\theta=(-\sin\theta,\cos\theta,0),\quad e_z=(0,0,1)
\end{equation}
So for any vector field $v=(v_1,v_2,v_3)$, we can rewrite as
\begin{equation*}
    v=v_re_r+v_\theta e_\theta+v_ze_z,
\end{equation*}
with $v_r=v\cdot e_r$, $v_\theta=v\cdot e_\theta,$ and $v_z=v\cdot e_z$. We call a function $f$ {axisymmetric} if it is independent of $\theta$, $\partial_\theta f=0$. Similarly we call a vector field $v$ axisymmetric if all components $v_r,v_\theta,v_z$ are axisymmetric.

\begin{thm}[Navier-Stokes (sub)critical swirl inflation]\label{thm7}
    Fix $\eta\in[0,\frac{1}{2}]$. For sufficiently small $\ep>0$, there exists $T_*\in (0,\varepsilon]$ and an axisymmetric initial data $u_0=u_{0r}e_{r}+u_{0z}e_{z}+u_{0\theta}e_{\theta}$ with $u_0\in C^{\infty}_{0}(\mathbb{R}^3)$ such that the following holds.
  \begin{enumerate}  
   \item  The scalar swirl component of the initial data $u_0$ satisfies
    \begin{equation*}\label{eq1.13}
        \|u_{0\theta}\|_{\dot H^{\frac{1}{2}+\eta}}\leq\ep.
    \end{equation*}

 \item There is a unique axisymmetric solution $u=u_{r}e_{r}+u_{z}e_{z}+u_{\theta}e_{\theta}$ in $C([0,T_*];H^1)$ to the Navier-Stokes equations \eqref{eq1.1} where $\nu=1$, with initial data $u_0$.  The scalar swirl component of the solution $u$ has lower bound
    \begin{equation*}\label{eq1.14}
        \|u_{\theta}(\cdot, T_*)\|_{\dot H^{\frac{1}{2}+\eta}}\geq \ep^{-1}.
    \end{equation*}
    \end{enumerate}
\end{thm}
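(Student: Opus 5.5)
The plan is to build the initial data as a small-swirl perturbation of Luo-type norm inflation. Luo \cite{Luo24} starts with a large, highly oscillating shear-type swirl-free field that acts on a small background. Here that mechanism is transplanted to the axisymmetric setting, and the amplified object is chosen to be the scalar swirl $u_\theta$. We take
\[
u_0=u_0^{\rm np}+u_{0\theta}e_\theta,
\]
where $u_0^{\rm np}=u_{0r}e_r+u_{0z}e_z$ is a smooth, compactly supported, swirl-free axisymmetric field localized near a ring $r\approx 1$. Because it has no swirl, Ladyzhenskaya and Ukhovskii--Yudovich guarantee that it generates a global smooth solution. The profile $u_{0\theta}$ is a small bump on the same ring.

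The swirl equation is the linear transport--diffusion equation
\[
\partial_t u_\theta+(u_r\partial_r+u_z\partial_z)u_\theta+\tfrac{u_r}{r}u_\theta=(\Delta-\tfrac1{r^2})u_\theta .
\]
Equivalently, $\Gamma=ru_\theta$ solves a drift--diffusion equation. So, to leading order, the swirl is passively transported by the poloidal flow. Its $\dot H^{\frac12+\eta}$ norm is amplified if the poloidal flow stretches gradients strongly over a very short time $T_*$.

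For the poloidal part we use a Luo-type construction with an anisotropic scale. The profile is $u^{\rm np}_0\sim A\,\phi(r)\,\psi(\lambda z)$-type shear: a large velocity amplitude $A$, oscillation or compression at frequency $\lambda$ in $z$, and size $O(1)$ in $r$. The deformation gradient then satisfies $\|\nabla u^{\rm np}\|_{L^\infty}\sim A\lambda$. Over time $T_*$ this produces a stretching factor $e^{cA\lambda T_*}$ in the swirl gradient.

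The parameters are then chosen as follows.
\begin{itemize}
\item The product $A\lambda T_*$ should equal $\log(\ep^{-2})$ times a constant, which gives amplification by a factor $\ep^{-2}$.
\item $T_*\le\ep$ and $\lambda^2T_*\ll1$, so that viscosity is negligible on the inflation scale.
\item The swirl amplitude is set so that $\|u_{0\theta}\|_{\dot H^{\frac12+\eta}}=\ep$.
\end{itemize}
The anisotropy is essential. The swirl is only measured in $\dot H^{\frac12+\eta}$, and the swirl bump has no high frequency initially, so its norm is small. Because $u^{\rm np}_0$ may be enormous in every norm, nothing constrains $A$. It is cleaner to let the nonswirl part be an explicit near-solution, for example a stationary or explicitly evolving hyperbolic-type straining flow near the ring, so that the transported swirl can be computed explicitly.

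The steps are as follows.
\begin{enumerate}
\item Write $u=\bar u+w$, where $\bar u$ is the explicit (or Ladyzhenskaya) swirl-free flow together with the approximate swirl $\bar u_\theta$. The approximate swirl is obtained by solving the linear swirl equation along the explicit strain, ignoring the feedback of $u_\theta$ on the poloidal part.
\item Show that $\|\bar u_\theta(T_*)\|_{\dot H^{\frac12+\eta}}\ge 2\ep^{-1}$. This is a lower bound on a function that is compressed in $z$ by the factor $e^{cA\lambda T_*}$. Interpolation with the $L^2$ norm, which is conserved up to viscosity, and a dual test function concentrated at the compressed frequency give the bound.
\item Note that the swirl feeds back into the poloidal equations only through the term $u_\theta^2/r$ and, in vorticity form, $\partial_z(u_\theta^2)/r$. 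Since $u_\theta=O(\ep)$ in amplitude, this forcing is quadratically small.
\item Use local well-posedness in $H^1$, with the data smooth, to get a unique solution on $[0,T_*]$. Then close the estimate on $w$ by a bootstrap.
\end{enumerate}

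The main obstacle is the last step: the anisotropic bootstrap showing that the swirl remainder $w_\theta$ satisfies $\|w_\theta(T_*)\|_{\dot H^{\frac12+\eta}}\le\ep^{-1}$ even though $\nabla\bar u$ is huge, of size $A\lambda$. A naive Gronwall estimate loses the same factor $e^{CA\lambda T_*}$ that produces the inflation. To overcome this, measure $w$ in anisotropic norms adapted to the strain: $z$-derivatives are weighted by $e^{-cA\lambda t}$ and $r$-derivatives are kept unweighted. The remainder's sources are the poloidal perturbation, which is driven by $\bar u_\theta^2/r=O(\ep^2)$, and the viscous corrections, which are $O(\lambda^2T_*)$. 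Both are estimated in these weighted norms, and only their integrated effect over $[0,T_*]$ is needed. The gain comes from the smallness factors $\ep^2$ and $\lambda^2T_*$, which must beat the polynomial losses in $A$ and $\lambda$. That competition dictates the final ordering: first fix $\ep$, then take $\lambda$ large, then set $A=\lambda^{-1}T_*^{-1}\log\ep^{-2}$ with $T_*=\lambda^{-3}$, say. The remaining work is checking these exponents.
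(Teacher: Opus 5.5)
There is a genuine gap, in two places. First, the stretching mechanism is never actually constructed. Since $T_*\|\nabla u^{\mathrm{np}}\|_{L^\infty}\sim\log\ep^{-2}\gg1$, the poloidal field goes through many turnover times on $[0,T_*]$. Unless it is an (approximately) \emph{steady} solution, you cannot say what it does to the swirl. The ansatz $A\phi(r)\psi(\lambda z)$ is neither steady nor hyperbolic. Up to lower-order strain, it is a shear with rate $A\lambda$, and a shear gives gradient growth linear in $A\lambda t$, not $e^{cA\lambda t}$. With your choice $A\lambda T_*\sim\log\ep^{-2}$, this means amplification by only a factor of order $\log\ep^{-2}$, far short of the required $\ep^{-2}$. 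Even for a genuine localized strain at scale $\lambda^{-1}$ acting on a unit-scale bump, exponential stretching occurs only in thin neighbourhoods of stagnation points, so step 2 needs a real computation that you have not supplied. Moreover, interpolating against the conserved $L^2$ norm bounds $\dot H^{\frac12+\eta}$ from above, not from below.

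The paper avoids all of this by taking Luo's poloidal field: a radial vortex in the shifted $(\rho,\varphi)$-plane, which is an exact steady 2D Euler flow. The transported swirl is then explicit, see \eqref{eq4.12}. Its growth is only linear (differential rotation), but it is made polynomially large by choosing $T_*$ so that shear rate times $T_*$ equals $\ep^{-N}$. The lower bound then follows from an explicit $\dot H^1$ lower bound combined with the $\dot H^2$ upper bound.

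Second, your bootstrap targets the wrong difficulty. The Gronwall factor $e^{CA\lambda T_*}$ is a fixed power of $\ep^{-1}$; in the paper it is even $\exp(C_\ep\,\ep^{-2-N})$. Either way it is harmless: once $\ep$ is fixed, any negative power of the large parameter beats it, so no strain-adapted weighted norms are needed. Conversely, $\ep^2$ cannot be the source of the gain, precisely because $\ep$ is fixed before $\lambda\to\infty$. The real obstruction is that the full remainder $w$ is driven mainly by the approximation error of the poloidal background itself: curvature, cut-off and viscous terms, of relative size $\mu^{-1}\nu$ in the paper and $\lambda^{-1}$ in your setting. This source is missing from your list. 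In absolute size it makes $w$ much larger than the swirl, so no estimate on $w$ as a whole can detect the inflation. Using the exact swirl-free solution as background would remove this error, but it destroys the explicit transport you need in step 2.

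The missing idea is to isolate the linear equation \eqref{eq4.70} for $W_\theta=w_\theta e_\theta$. Its forcing, $\mathcal E_\theta-\bar U_\theta\cdot\nabla W_{\neq\theta}-W_{\neq\theta}\cdot\nabla\bar U_\theta$, is always linear in the small swirl amplitude. Plain $\dot H^k$ energy estimates, Gronwall with an $\ep$-dependent constant, and induction on $k$ then give Lemma~\ref{lem4.4}, and interpolation closes the argument. Finally, local well-posedness in $H^1$ gives existence only for a time of order $\|u_0\|_{H^1}^{-4}\ll T_*$ for your large data. Existence on $[0,T_*]$ therefore has to come from the a priori remainder bounds by continuation, as in Proposition~\ref{prop4.3}.
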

\begin{thm}[Euler subcritical swirl inflation]\label{thm6}
 For sufficiently small $\ep>0$, there exists $T_*\in (0,\varepsilon]$ and an axisymmetric initial data $u_0=u_{0r}e_{r}+u_{0z}e_{z}+u_{0\theta}e_{\theta}$ with $u_0\in C^{\infty}_{0}(\mathbb{R}^3)$ such that the following holds.
  \begin{enumerate}  
   \item  The vector swirl of the initial data $u_0$ satisfies
    \begin{equation*}\label{eq1.11}
        \|u_{0\theta} e_\theta\|_{\dot H^{3}}\leq\ep.
    \end{equation*}

 \item There is a unique axisymmetric solution $u=u_{r}e_{r}+u_{z}e_{z}+u_{\theta}e_{\theta}$ in $C([0,T_*];H^3)$ to the Euler  equations \eqref{eq1.1} where $\nu=0$, with initial data $u_0$.  The vector swirl  of the solution $u$ has lower bound
    \begin{equation*}\label{eqEulerinflation}
         \|u_{\theta}(\cdot, T_*)e_\theta\|_{\dot H^{3}}\geq \ep^{-1}.
    \end{equation*}
    \end{enumerate}
\end{thm}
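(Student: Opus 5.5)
The plan is to exploit the stretching of the swirl by a large, fast axisymmetric poloidal flow with no swirl, following Luo's norm-inflation mechanism \cite{Luo24}. The swirl satisfies
\begin{equation*}
\partial_t u_\theta - \nu\Big(\Delta-\frac{1}{r^2}\Big)u_\theta + u_r\partial_r u_\theta + u_z\partial_z u_\theta + \frac{u_r u_\theta}{r}=0 ,
\end{equation*}
so $\Gamma=ru_\theta$ is transported (and diffused) by $(u_r,u_z)$. A background flow that compresses in $r$ and stretches in $z$ (or conversely) near a point away from the axis therefore amplifies derivatives of $\Gamma$ exponentially. I would place no smallness on $u_{0r},u_{0z}$, which are allowed to be huge. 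I would take the initial data as a sum $u_0=U_0+\delta\, w_0 e_\theta$. Here $U_0$ is a swirl-free axisymmetric field built as a localized, smoothly cut-off hyperbolic (strain) flow of amplitude $A$ in the $(r,z)$ plane, centred at $(r,z)=(1,0)$. The term $w_0$ is a swirl profile oscillating at frequency $N$, with $\delta$ chosen so that $\|\delta w_0\|_{\dot H^{1/2+\eta}}\le\ep$.

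The key new ingredient is the \emph{anisotropic scale}: $w_0$ oscillates in one direction, say $w_0=\phi(r,z)\cos(Nz)$, aligned with the compressing direction of the strain. Under a linear strain of rate $A$, a wavenumber $N$ grows like $Ne^{At}$. Hence $\|u_\theta(T_*)\|_{\dot H^s}\approx e^{sAT_*}\|u_{0\theta}\|_{\dot H^s}$ up to cutoff and viscous errors, with $s=\frac12+\eta$ (respectively $s=3$ for Euler). Choosing $AT_*\approx s^{-1}\log(\ep^{-2})$ with $T_*\le\ep$ gives the inflation. Viscosity would damp the growth by a factor $\exp(-\int_0^{T_*}N^2e^{2At}dt)$, so I require $N^2e^{2AT_*}/A\ll1$, which is compatible once $A$ is taken very large. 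For Navier--Stokes the steps are as follows.
\begin{enumerate}
\item Choose $U_0$ so that the swirl-free solution $U$ of \eqref{eq1.1} stays close to the explicit strain on $[0,T_*]$. This relies on the Ladyzhenskaya/Ukhovskii--Yudovich theory and a quantitative estimate, as in \cite{Luo24}, that on times $\lesssim\log(\cdot)/A$ the nonlinear self-interaction of the strain-dominated profile is controlled.
\item Solve the linearized swirl equation around $U$ explicitly via Lagrangian (Cauchy-type) formulas to obtain the main part $w_{\rm lin}$ with the lower bound.
\item Write $u=U+\delta w_{\rm lin}e_\theta+R$ and run a bootstrap on $R$.
\end{enumerate}
The swirl also feeds back into the poloidal part through the term $u_\theta^2/r$ in the $u_r$ equation. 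This feedback is quadratic in $\delta$, so it is small in a lower norm.

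I expect the main obstacle to be the bootstrap for the remainder. One needs $\|R_\theta(T_*)\|_{\dot H^s}\ll\|\delta w_{\rm lin}(T_*)\|_{\dot H^s}$, while the energy estimates for $R$ lose factors $e^{CAT_*}$, which is polynomially large in $\ep^{-1}$. My first attempt would be an \emph{anisotropic} bootstrap with separate weighted norms for $\partial_r$ and $\partial_z$, each with time weights $e^{\pm At}$ matching the strain. The stretching term then cancels in the energy identity, and only the cutoff errors and the coupling $u_\theta^2/r$ remain. These must be beaten by choosing $\delta$ tiny relative to $A,N$ and the cutoff scale large. For Euler, Theorem \ref{thm6}, the same scheme applies with $\nu=0$, so the viscous constraint disappears. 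Local well-posedness in $H^3$ supplies the solution, and the vector swirl $u_\theta e_\theta$ is handled in $\dot H^3$. Because $u_0$ is supported away from the axis, this norm is equivalent there to the scalar $H^3$ norm up to lower-order terms. The growth rate $e^{3AT_*}$ still gives inflation, and the bootstrap is run in $H^3$ with the same anisotropic weights.
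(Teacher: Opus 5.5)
Your basic mechanism is sound: a high-frequency swirl, with $ru_\theta$ transported by a strong swirl-free poloidal flow. But Step 1 is a genuine gap, and everything else rests on it. You need a swirl-free axisymmetric Euler solution $U$ whose \emph{actual} flow map gives $|D\Phi_t^{-T}e_z|\approx e^{At}$ on the support of the swirl, with $AT_*\approx\frac13\log(\ep^{-2})$, i.e.\ over many eddy-turnover times.

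A smoothly cut-off linear strain is not a steady solution. Its vorticity sits in the cut-off region and evolves on the time scale $1/A$, which is the same as the stretching time scale. Along the compressing axis that vorticity is carried toward the stagnation point. So nothing keeps $\nabla U$ close to $\mathrm{diag}(A,-A)$ near the swirl for $\log(\ep^{-1})$ turnover times.

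Citing \cite{Luo24} does not cover this. There, and in the paper, $(u_{0r},u_{0z})$ is an \emph{exact} stationary 2D Euler vortex in the shifted variables $(\rho,\varphi)$ of \eqref{eq4.2}. The only errors are curvature and corrector terms of relative size $\mu^{-1}\nu$. The swirl is transported by the differential rotation through the explicit formula \eqref{eq4.12}. Its gradient growth is only linear in time (phase mixing), and stationarity is what makes $\ep^{-N}$ turnover times affordable.

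To follow your route you would have to construct an approximately stationary profile with a hyperbolic stagnation point, with its own small parameter controlling the errors. You would also have to prove a lower bound on the stretching of its true flow map. None of this is in the proposal. Note also that the paper's ``anisotropic scale'' is not a directional oscillation. It is the different amplitude $\ep^2\mu^{-2}\nu^{\frac12}$ of the swirl, compared with Luo's poloidal amplitude $\ep^2\mu^{1-s}\nu^{\frac12}$.

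The remainder step does not close as written either. The $e^{\pm At}$-weighted norms cancel the stretching only if $\nabla U$ equals the diagonal strain on the support of $R$. That fails for the true $U$ because of the cut-off, the curvature, and the departure from linearity away from the stagnation point. In addition, $R\cdot\nabla U$ produces further $O(A)$ terms for the poloidal components of $R$.

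The amplitude is also not a free small parameter. The $\dot H^3$ size $\delta N^3\|\phi\|_{L^2}$ of the swirl is pinned at $\sim\ep$. Lowering it forces a larger $AT_*$, and hence a larger Gronwall loss $e^{CAT_*}$.

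The paper closes the argument with a parameter independent of $\ep$:
\begin{itemize}
\item By \eqref{eq4.77} and the choice of $T_*$, the Gronwall factor $\exp(C\int_0^{T_*}\|\nabla U_{\neq\theta}\|_{L^\infty}dt)$ depends only on $\ep$.
\item Luo's remainder bounds \eqref{eq4.67} gain $(\mu^{-1}\nu)^{\delta_k}$.
\item Lemma \ref{lem4.4} gives a linear estimate for $W_\theta=w_\theta e_\theta$, whose forcing in \eqref{eq4.70} is proportional to $\bar U_\theta$ and $E_\theta$ (here $F_\theta$). This yields $\|W_\theta(T_*)\|_{\dot H^3}\lesssim_\ep(\mu^{-1}\nu)^{0.9}$, which is beaten by letting $\mu\to\infty$ at fixed $\ep$.
\end{itemize}
In your setting the analogous parameter would presumably be $N^{-1}$ at fixed $\delta N^3$. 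You would then need lower-order bounds on $R$ to absorb $R\cdot\nabla(\delta w e_\theta)$, and this is not set up.

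Finally, $H^3$ local well-posedness alone does not reach $T_*$, because $\|u_0\|_{H^3}$ is huge. Existence on $[0,T_*]$ has to come out of the remainder estimates.
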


\subsection{Remarks}

\subsubsection{Role of critical norms and small swirl regularity criteria}
Let us mention that there are critical norms with respect to the Navier-Stokes scaling $\lambda f(\lambda x)$ (that are not translation invariant) such that the swirl does not inflate. In particular, for a sufficiently smooth axisymmetric solution with swirl $u_{\theta}:=u\cdot e_{\theta}$, the apriori estimate holds
\begin{equation}\label{swirlapriori}
\|ru_{\theta}(r,z,t)\|_{L^{\infty}(\mathbb{R}^2)}\leq \|ru_{\theta}(r,z,0)\|_{L^{\infty}(\mathbb{R}^2)},
\end{equation}
so norm inflation cannot occur in such a critical space.
For special domains that are exterior to particular cones and with certain boundary conditions imposed, it is shown in \cite{LZP2024} and \cite{LYZ2026} that initial data with small swirl in this critical space produces a global unique strong solution of the Navier-Stokes equations.

Let us also mention that it is not known whether the quantity in \eqref{swirlapriori} being small with respect to some absolute constant on some time interval suppresses singularities. At the time of writing, some decay in $r$ is required as $r\rightarrow 0$. See \cite{CFZ2017}, \cite{LQZ2017}, \cite{W2016} and \cite{palasek26} for example.  We cannot pretend to give an exhaustive account of regularity criteria in the axisymmetric setting and recommend the excellent review article \cite{ZP2022}.

\subsubsection{Role of vector swirl for the Euler equations}
The swirl inflation result for the 3D Euler equations is stated for the vector swirl rather than the scalar swirl.
This is because the local well-posedness Sobolev spaces for 3D Euler are $\dot{H}^{s}$ with $s>\tfrac{5}{2}$, yet a smooth  divergence-free axisymmetric vector field may have an associated scalar swirl that fails to be $\dot{H}^{2}$. This can be seen for the smooth divergence-free axisymmetric vector
$f(r^2+z^2)re_{\theta}$, where $f$ is a smooth compactly supported function equal to $1$ on $\{x:|x|\leq \frac{1}{2}\}$. Let us mention that Theorem \ref{thm6} also holds true for the $\dot{H}^{3}$ norm with $\dot{H}^{k}$ for any fixed $k\in\mathbb{N}$. The proof is virtually unchanged, up to a choice of parameters.
\subsection{Approach}
We will give some heuristics for proving the theorems above in this section, which rely upon Luo's seminal work  \cite{Luo24}\footnote{For further developments on supercritical norm inflation for Navier-Stokes equation, see \cite{Luo2025}.}. To fix ideas we focus on the Navier-Stokes case.

To achieve Theorem \ref{thm7} we utilise Luo's axisymmetric pseudo solution framework in \cite{Luo24}. The radial and vertical components of our initial data  are the same as in Luo's proof in \cite{Luo24} of $\dot{H}^{s}$ norm inflation ($s\in(0,\frac{1}{2})$) for the 3D Navier-Stokes equations, as in \cite{Luo24} this quantitatively supercritical background is what drives the inflation. In order to have a small swirl component in the (sub)critical space $\dot{H}^{\frac{1}{2}+\eta}$, we choose the swirl component to have an anisotropic scale, different to Luo's initial data. As in \cite{Luo24}, the approximate solution's angular component solves a transport equation that experiences growth in Sobolev norms, which drives the inflation. However, in our case the growth is quantitatively smaller than in \cite{Luo24} and the remainder estimate between the actual solution and the approximate solution in \cite{Luo24} is too large to infer inflation for the swirl component of the actual solution. We resolve this by anisotropically bootstrapping the additional smallness of the approximate solution's swirl into a more precise estimate for the swirl of the remainder. This allows us to infer swirl inflation for the actual solution.

\subsection{Notation and preliminaries}
We will list some of the notations we will frequently use in the following text. 
\begin{itemize}
    \item We denote $A\lesssim B$ if there exists some universal constant $C>0$, such that $A\leq CB$. Similarly, we can define $A\gtrsim B$. We denote $A\sim B$ if $A\lesssim B$ and $B\lesssim A$. Furthermore, for parameters $\{\mu\}_{i=1}^k$, we use $A\lesssim_{\mu_1,...,\mu_k}B$ to denote $A\leq C(\mu_1,...,\mu_k)B$ for some constant $C(\mu_1,...,\mu_k)$ which only depends on the parameters $\{\mu_i\}_{i=1}^k$.
    \item For any $\alpha,\beta\in\mathbb{N}^d$, we say $\alpha\leq \beta$ if for any $1\leq i\leq d$, there holds $\alpha_i\leq \beta_i$. If there exists at least one $j$ such that $\alpha_j<\beta_j$, then we say $\alpha<\beta$.
    
    \item If $X$ is a Banach space with norm $\|\cdot\|_{X}$, then $L^{s}(a,b;X)$, with $a<b$ and $s\in[1,\infty)$,  will denote the usual Banach space of strongly measurable $X$-valued functions $f(t)$ on $(a,b)$ such that
$$\|f\|_{L^{s}(a,b;X)}:=\left(\int\limits_{a}^{b}\|f(t)\|_{X}^{s}dt\right)^{\frac{1}{s}}<+\infty.$$ 
The usual modification is made if $s=\infty$. Let $C([a,b]; X)$ denote the space of continuous $X$ valued functions on $[a,b]$ with usual norm.
    \item For any $\alpha \in \mathbb{N}^d$, we define the operator $\nabla^\alpha$ by
    \begin{equation*}
        \nabla^\alpha f=\partial_{x_1}^{\alpha_1}\cdots\partial_{x_d}^{\alpha_d}f,
\qquad \alpha=(\alpha_1,\cdots,\alpha_d).
    \end{equation*}
    for any scalar or vector function $f$. Furthermore, for any $k\in\mathbb{N}$, we define
    \begin{equation*}
        \|\nabla^kf\|_{X}:=\sum_{|\alpha|=k}\|\nabla^\alpha f\|_X.
    \end{equation*}
    \item For any $k\in\mathbb{N}$, $p\in[1,\infty]$, we define the homogeneous Sobolev space
    \begin{equation*}
       \dot  W^{k,p}:=\left\{ f:\|f\|_{\dot W^{k,p}}:=\|\nabla^kf\|_{L^p}<\infty \right\},
    \end{equation*}
    and for the corresponding inhomogeneous space
    \begin{equation*}
       W^{k,p}:=\left\{ f:\|f\|_{ W^{k,p}}:=\sum_{j\leq k}\|\nabla^jf\|_{L^p}<\infty \right\}.
    \end{equation*}
    Explicitly, for the case $p=2$ and any $s\in\mathbb{R}_+$, we define the space $\dot H^{s}$ and $H^s$ by the following norm
    \begin{equation*}
        \|f\|^2_{\dot H^s}=\int_{\mathbb{R}^d}|\xi|^{2s}|\hat{f}(\xi)|^2d\xi,\quad \|f\|_{H^s}^2=\int_{\mathbb{R}^d}(1+|\xi|^2)^{s}|\hat{f}(\xi)|^2d\xi.
    \end{equation*}
    The following interpolation inequality holds in Sobolev spaces
    \begin{equation}\label{eq4.122}
        \|f\|_{\dot H^{\alpha}}\leq\|f\|_{\dot H^{\alpha_0}}^{\theta}\|f\|_{\dot H^{\alpha_1}}^{1-\theta},\quad \alpha=\theta \alpha_0+(1-\theta)\alpha_1,
    \end{equation}
    for any $\alpha_0\leq \alpha\leq \alpha_1$. The proof of this inequality above can be seen in Proposition 1.32 of \cite{BCD2011}. 

    \item For a smooth axisymmetric vector field $f=f_{r}(r,z)e_{r}+f_{\theta}(r,z)e_{\theta}+f_{z}(r,z)e_{z}$,
    \begin{equation}\label{matrixnorm}
    |\nabla^{k}(f_{\star}(r,z)e_{\star})|\leq |\nabla^k f|\quad\textrm{for}\quad \star\in\{r,\theta, z\}.
    \end{equation}
    This follows directly from the fact that
   $$
    \nabla^\alpha \left(f_\theta(r,z)\frac{y}{r}\right)\nabla^\alpha \left(f_r(r,z)\frac{x}{r}\right)=0\quad\textrm{on}\quad\{(x,y,z):y=0,\,x\neq 0\},
$$
due to  the function  being odd in $y$ and  the smoothness of $f_\theta(r,z)\frac{y}{r}$ and $f_r(r,z)\frac{x}{r}$ by Corollary 1 of \cite{LW2009}.
    \end{itemize}

\section{Navier-Stokes case}\label{sec4.1}
\subsection{Construction of initial data}\label{sec4.1.1}
As in \cite{Luo24}, we will also study the problem on the shifted $rz-$plane, where the auxiliary parameters $(\rho,\varphi)$ are defined as
\begin{equation}\label{eq4.2}
    r=\nu^{-1}+\rho \cos\varphi,\quad z=\rho\sin\varphi,\quad \rho\geq 0,\quad\varphi\in[0,2\pi].
\end{equation}
Here $\nu$ is a large constant which will be determined later, see also \eqref{eq4.1}. 
    Fix $\eta\in[0,\frac{1}{2}]$ and $s\in(0,\frac{1}{2})$. As in \cite[(5.1)]{Luo24} let $\mu\gg 1$, and set $b=\frac{\frac{1}{2}-s}{100}>0$. We define
\begin{equation}\label{eq4.1}
    \nu=\mu^{1-b}\ll\mu,\quad T_*=\ep^{-2-N}\mu^{s-2}\nu^{-\frac{1}{2}},\quad N=\max\{\frac{10}{s},100\},
\end{equation}
for some $\ep$ to be fixed later. These are as in \cite[(3.6), (3.22)-(3.23)]{Luo24}. Below, with a slight abuse of notation, we will not distinguish $f(r,z)$ and $f(\rho,\varphi)$.

We define our initial data as
\begin{equation}\label{eq4.4}
    \begin{aligned}
    &u_0=u_{0r}e_r+u_{0\theta}e_\theta+u_{0z}e_z+u_ce_z,\\
    &u_{0\theta}=\ep^2\mu^{\frac{1}{2}-\eta}\nu^{\frac{1}{2}}g(\mu\rho)\sin\varphi,\\
        &u_{0r}=-\ep^2\mu^{1-s}\nu^{\frac{1}{2}}f'(\mu\rho)\partial_z\rho,\\
        &u_{0z}=\ep^2\mu^{1-s}\nu^{\frac{1}{2}}f'(\mu\rho)\partial_r\rho,\\
        &u_c=\ep^2\mu^{-s}\nu^{\frac{1}{2}}\frac{f(\mu\rho)}{r},
    \end{aligned}
\end{equation}
for some $\ep\ll 1$, with $\eta\in[0,\frac{1}{2}]$ and $s\in(0,\frac{1}{2})$ fixed as above, and two non-zero $C_c^\infty$ functions $f,g$ satisfying 
\begin{equation}\label{eq4.100}
    \begin{aligned}
        &\operatorname{supp} g\subset \{\rho:|\rho|\in [1,\tfrac{3}{2}]\},\quad f'(x)=1\quad\textrm{on}\quad\{\rho:|\rho|\in [1,\tfrac{3}{2}]\} ,\\
        &\operatorname{supp} f\subset \{x:|x|\in [\tfrac{1}{2},2]\}.
\end{aligned}
\end{equation}
Note that the term $u_{c}$ in \eqref{eq4.4} is the divergence corrector, to ensure $\nabla\cdot u_0=0$. Observe that $u_{0r}$, $u_{0z}$ and $u_c$ are as in \cite{Luo24}, whereas $u_{0\theta}$ differs from \cite{Luo24} by a multiplicative scale $\mu^{s-\frac{1}{2}-\eta}$.

The initial data then has the following properties.
\begin{prop}\label{prop4.1}
    Let $u_0$ be the initial data defined in \eqref{eq4.4}, then $u_0$ satisfies the following:\\
    i) $u_0$ is smooth and divergence-free.\\
    ii) $u_0$ has compact support in $\{(r,z):|r-\nu^{-1}|\leq 2\mu^{-1},|z|\leq 2\mu^{-1}\}$.\\
    iii) For any $k\in\mathbb{N}$ and $p\in[1,\infty]$, there holds
    \begin{equation}\label{eq4.6}
        \begin{aligned}
            &\|u_{0r}\|_{\dot W^{k,p}}+\|u_{0z}\|_{\dot W^{k,p}}\lesssim_{k,p}\ep^2\mu^{k-s}(\mu^{1-\frac{2}{p}}\nu^{\frac{1}{2}-\frac{1}{p}}),\\
            &\|u_{0\theta}\|_{\dot W^{k,p}}\lesssim_{k,p}\ep^2\mu^{k-\frac{1}{2}-\eta}(\mu^{1-\frac{2}{p}}\nu^{\frac{1}{2}-\frac{1}{p}}),\\
            &\|u_{c}\|_{\dot W^{k,p}}\lesssim_{k,p}\ep^2\mu^{k-1-s}\nu(\mu^{1-\frac{2}{p}}\nu^{\frac{1}{2}-\frac{1}{p}}).
        \end{aligned}
    \end{equation}
    In particular,
    \begin{equation}\label{eq4.7}
        \|u_0\|_{\dot H^{s}}\leq C_1\ep^2,\quad \|u_{0\theta}\|_{\dot H^{\frac{1}{2}+\eta}}\leq C_1\ep^2,
    \end{equation}
    for some universal constant $C_1$.
\end{prop}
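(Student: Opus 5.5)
We verify the three properties directly from the explicit formulas \eqref{eq4.4}, working in the shifted coordinates \eqref{eq4.2}.

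\textbf{Smoothness, support, divergence.} By \eqref{eq4.100}, $f(\mu\rho)$, $f'(\mu\rho)$ and $g(\mu\rho)$ vanish unless $\mu\rho\in[\tfrac12,2]$. Hence every component is supported in the annular region $\{\tfrac{1}{2}\mu^{-1}\le\rho\le 2\mu^{-1}\}$. This region sits inside $\{|r-\nu^{-1}|\le 2\mu^{-1},\,|z|\le 2\mu^{-1}\}$, which gives ii). Since $\nu^{-1}\gg\mu^{-1}$ (because $\nu=\mu^{1-b}\ll\mu$), the support stays at distance $\gtrsim\nu^{-1}$ from the axis $r=0$.

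On this region the maps $(r,z)\mapsto(\rho,\varphi)$, the factor $1/r$, $\sin\varphi=z/\rho$ and $\partial_r\rho,\partial_z\rho$ are all smooth. Moreover $e_r,e_\theta$ are smooth away from the axis. Therefore $u_0\in C^\infty_c$.

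For the divergence, recall that for axisymmetric fields $\operatorname{div}v=\partial_r v_r+\frac{v_r}{r}+\partial_z v_z$, with the swirl not contributing. Write $\psi=\ep^2\mu^{-s}\nu^{\frac12}f(\mu\rho)$, so that $u_{0r}=-\partial_z\psi$, $u_{0z}=\partial_r\psi$, $u_c=\psi/r$. Then
$$\partial_r u_{0r}+\partial_z u_{0z}=0,\qquad \frac{u_{0r}}{r}+\partial_z u_c=-\frac{\partial_z\psi}{r}+\frac{\partial_z\psi}{r}=0,$$
which gives i).

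\textbf{Scaling bounds iii).} Each component has the form $A\,h(\mu\rho,\varphi)\cdot(\text{smooth factor})$ with $h$ a fixed smooth profile, with amplitudes
\begin{itemize}
\item $A=\ep^2\mu^{1-s}\nu^{\frac12}$ for $u_{0r},u_{0z}$,
\item $A=\ep^2\mu^{\frac12-\eta}\nu^{\frac12}$ for $u_{0\theta}$,
\item $A=\ep^2\mu^{-s}\nu^{\frac12}\cdot\nu$ for $u_c$, using $1/r\sim\nu$.
\end{itemize}

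Each Cartesian derivative costs at most a factor $\mu$. It either hits the profile, giving $\mu$; or hits $\sin\varphi$, $\partial\rho$ or $\rho$-dependent factors, giving $\rho^{-1}\sim\mu$; or hits $1/r$ or $e_r,e_\theta$, giving $\nu\le\mu$. So $|\nabla^k(\cdot)|\lesssim_k A\mu^k$ pointwise on the support. For this we can use \eqref{matrixnorm}, or simply differentiate the Cartesian components.

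The support has Lebesgue measure in $\mathbb{R}^3$ of order $r\cdot\mu^{-2}\sim\nu^{-1}\mu^{-2}$. Hence
$$\|\nabla^k(\cdot)\|_{L^p}\lesssim A\mu^k(\nu^{-1}\mu^{-2})^{1/p}.$$
Rewriting gives $A\mu^k\cdot\mu^{-\frac2p}\nu^{-\frac1p}=A\mu^{k-1}\nu^{-\frac12}\cdot(\mu^{1-\frac2p}\nu^{\frac12-\frac1p})$. Inserting each amplitude yields exactly \eqref{eq4.6}.

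\textbf{The estimates \eqref{eq4.7}.} Apply \eqref{eq4.6} with $p=2$, so the bracket equals $1$. For $k=0,1$ this gives
$$\|u_{0r}\|_{\dot H^k}+\|u_{0z}\|_{\dot H^k}\lesssim\ep^2\mu^{k-s},\qquad \|u_c\|_{\dot H^k}\lesssim\ep^2\mu^{k-1-s}\nu\le\ep^2\mu^{k-s}.$$
Interpolation \eqref{eq4.122} with $\theta=1-s$ then gives $\|u_0\|_{\dot H^s}\lesssim\ep^2$, where the swirl is handled the same way.

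For the swirl, \eqref{eq4.6} gives $\|u_{0\theta}\|_{\dot H^k}\lesssim\ep^2\mu^{k-\frac12-\eta}$ for $k=0,1,2$. Since $\tfrac12+\eta\in[\tfrac12,1]$, interpolating between $k=0$ and $k=1$ gives $\|u_{0\theta}\|_{\dot H^{\frac12+\eta}}\lesssim\ep^2$.

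The only point needing care is the pointwise derivative bound. We must check that derivatives landing on the geometric factors $1/r$, $\sin\varphi$ and $\partial\rho$ never cost more than $\mu$ per derivative. This holds because on the support $\rho\sim\mu^{-1}$ and $r\sim\nu^{-1}\gg\mu^{-1}$.
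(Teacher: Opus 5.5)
Your proof is correct, and it rests on the same scaling facts as the paper; the difference is that you prove them yourself where the paper cites them. The paper does not redo the computation. It observes that $u_{0r},u_{0z},u_c$ coincide with Luo's data, so i), ii) and the first and third lines of \eqref{eq4.6} are quoted from \cite{Luo24} (Lemma 3.2 and (3.15) there). The swirl line is obtained by multiplying Luo's swirl estimate ((3.16) there) by the amplitude ratio $\mu^{s-\frac12-\eta}$. Then \eqref{eq4.7} follows from the same $L^2$--$\dot H^1$ interpolation you use.

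Your version is the self-contained form of what sits behind Luo's lemma. Each derivative costs $\lesssim\mu$ because $\rho\sim\mu^{-1}$ and $r\sim\nu^{-1}\gg\mu^{-1}$ on the support. The support has volume $\sim\nu^{-1}\mu^{-2}$, which reproduces the factor $\mu^{1-\frac2p}\nu^{\frac12-\frac1p}$ exactly. Writing $u_{0r}=-\partial_z\psi$, $u_{0z}=\partial_r\psi$, $u_c=\psi/r$ makes the divergence-free property transparent, which the paper leaves to the reference.

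One small correction. \eqref{matrixnorm} bounds a single component $f_\star e_\star$ by the whole field, so it cannot supply the upper bound $|\nabla^k(u_{0r}e_r)|\lesssim_k A\mu^k$. What actually gives the vector-field bounds needed for $\|u_0\|_{\dot H^s}$ is your alternative: differentiate the Cartesian components directly, where derivatives of $x_i/r$ cost $\nu\le\mu$.

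Also, in the $\dot H^s$ bound for $u_0$ the swirl is harmless because $\|u_{0\theta}e_\theta\|_{\dot H^k}\lesssim\ep^2\mu^{k-\frac12-\eta}\le\ep^2\mu^{k-s}$ for $k=0,1$, since $s<\frac12\le\frac12+\eta$. This is worth stating rather than saying it is handled "the same way".
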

The proof is essentially contained in \cite[(3.15) and Lemma 3.2]{Luo24}, where the swirl data is
$$\ep^2\mu^{1-s}\nu^{\frac{1}{2}}g(\mu\rho)\sin\varphi.$$
In \eqref{eq4.4}, our swirl initial data differs from \cite{Luo24} by a multiplicative scale $\mu^{s-\frac{1}{2}-\eta}\ll 1$. So from \cite[(3.16)]{Luo24}, there holds
\begin{equation*}
    \|u_{0\theta}\|_{\dot W^{k,p}}\lesssim_{k,p} \mu^{s-\frac{1}{2}-\eta}\ep^2\mu^{k-s}(\mu^{1-\frac{2}{p}}\nu^{\frac{1}{2}-\frac{1}{p}})=\ep^2\mu^{k-\frac{1}{2}-\eta}(\mu^{1-\frac{2}{p}}\nu^{\frac{1}{2}-\frac{1}{p}}).
\end{equation*}
Then, \eqref{eq4.7} follows from the estimate above and the Sobolev interpolation inequality \eqref{eq4.122}.
\subsection{Analysis of approximate system}\label{sec4.1.2}

Let us recall the approximate solution framework from \cite{Luo24}, and derive the corresponding estimates for our setting. As mentioned in \cite{Luo24}, $(u_{0r},u_{0z})$ is a solution to the stationary 2D Euler equations \begin{equation}\label{eq4.9}
  \left\{\begin{array}{l}
  (u_{0r}\partial_r+u_{0z}\partial_z)u_{0r}+\partial_rP=0, \\
  (u_{0r}\partial_r+u_{0z}\partial_z)u_{0z}+\partial_zP=0, \\
    \partial_ru_{0r}+\partial_zu_{0z}=0,
    \end{array}\right.
\end{equation}
 As in \cite{Luo24}, define the approximate solution \begin{equation}\label{eq4.10}
    \bar u=\bar u_re_r+\bar u_\theta e_\theta+\bar u_ze_z.
\end{equation} 
Here $\bar u_r= u_{0r}$, $\bar u_z=u_{0z}+u_c$  and $\bar u_\theta$ satisfies the transport equation \begin{equation}\label{eq4.11}
   \left\{\begin{array}{l}
   \partial_t\bar u_\theta+\left(u_{0r}\partial_r+u_{0z}\partial_z\right)\bar u_\theta=0,\\
 \bar u_\theta|_{t=0}=u_{0\theta},
 \end{array}\right.
\end{equation}
 which has the explicit formula
\begin{equation}\label{eq4.12}
    \bar u_\theta=\ep^2\mu^{\frac{1}{2}-\eta}\nu^{\frac{1}{2}}g(\mu\rho)\sin(\varphi-\ep^2t\rho^{-1}\mu^{1-s}\nu^{\frac{1}{2}}).
\end{equation}
As discussed in \cite{Luo24}, the approximate solution $\bar u$ defined in \eqref{eq4.10} is axisymmetric, smooth and divergence-free, and satisfies the following system
\begin{equation}\label{eq4.13}
    \begin{aligned}
        \left\{\begin{array}{l}
              \partial_t\bar u_r-(\partial_r^2+\frac{\partial_r}{r}+\partial_z^2-\frac{1}{r^2})\bar u_r+(\bar u_r\partial_r+\bar u_z\partial_z)\bar u_r-\frac{\bar u_\theta ^2}{r}=E_r+\partial_rP,\\
              \partial_t\bar u_\theta-(\partial_r^2+\frac{\partial_r}{r}+\partial_z^2-\frac{1}{r^2})\bar u_\theta+(\bar u_r\partial_r+\bar u_z\partial_z)\bar u_\theta+\frac{\bar u_\theta \bar u_r}{r}=E_\theta,\\
              \partial_t\bar u_z-(\partial_r^2+\frac{\partial_r}{r}+\partial_z^2)\bar u_z+(\bar u_r\partial_r+\bar u_z\partial_z)\bar u_z=E_z+\partial_zP,
        \end{array}\right.
    \end{aligned}
\end{equation}
with the force term $(E_r,E_\theta,E_z)$ given by
\begin{equation*}\label{eq4.14}
        \left\{\begin{array}{l}
             E_r=u_c\partial_zu_{0r}-\frac{1}{r}\bar u_\theta^2-(\partial_r^2+\frac{\partial_r}{r}+\partial_z^2-\frac{1}{r^2})\bar u_r,  \\
             E_\theta=u_c\partial_z\bar u_\theta+\frac{1}{r}\bar u_\theta u_{0r}-(\partial_r^2+\frac{\partial_r}{r}+\partial_z^2-\frac{1}{r^2})\bar u_\theta, \\
             E_z=u_{0r}\partial_ru_c+u_{0z}\partial_zu_c+u_c\partial_zu_c+u_c\partial_zu_{0z}-(\partial_r^2+\frac{\partial_r}{r}+\partial_z^2)\bar u_z.
        \end{array}\right.
\end{equation*}
Moreover, $\bar u$ is the solution to the following system
\begin{equation}\label{eq4.103}
    \left\{\begin{array}{l}
         \partial_t\bar u-\Delta \bar u+\bar u\cdot\nabla\bar u=E+\nabla P,  \\
          \operatorname{div} \bar u=0,\\
          \bar u|_{t=0}=\bar u_0,
    \end{array}
    \right.
\end{equation}
where $E=E_re_r+E_\theta e_\theta+E_ze_z$. It is worth mentioning that both the approximate solution $\bar u$ and the perturbation force $E$ are supported on a ring with distance $\nu^{-1}$ from the $z-$axis. 

The following proposition gives estimates for $\bar u$ for our setting.
\begin{prop}\label{prop4.8}
    The function $\bar u$ is a smooth axisymmetric solution to \eqref{eq4.13} on $[0,T_*]$, with the following estimate for any $k\in\mathbb N$ and $p\in[1,\infty]$,
    \begin{equation}\label{eq4.15}
        \begin{aligned}
            &\|\bar{u}\|_{L^{\infty}(0,T_*;\dot{W}^{k,p})}+\|\bar u_re_r+\bar u_ze_z\|_{L^\infty(0,T_*;\dot W^{k,p})}\lesssim_{k,p} \ep^{2-Nk} \mu^{k-s}(\mu^{1-\frac{2}{p}}\nu^{\frac{1}{2}-\frac{1}{p}}),\\
            &\|\bar u_\theta\|_{L^\infty(0,T_*;\dot W^{k,p})}+\|\bar u_\theta e_\theta\|_{L^\infty(0,T_*;\dot W^{k,p})}\lesssim_{k,p} \ep^{2-Nk} \mu^{k-\frac{1}{2}-\eta}(\mu^{1-\frac{2}{p}}\nu^{\frac{1}{2}-\frac{1}{p}}).
        \end{aligned}
    \end{equation}
    Moreover, the remainder force term $E$ satisfies the following estimate
    \begin{equation}\label{eq4.16}
        \begin{aligned}
            &\|E\|_{L^\infty(0,T_*;\dot H^{k})}\lesssim_{\ep,k}   \mu^{k-s}(\mu^{2-s}\nu^{\frac{1}{2}})(\mu^{-1}\nu),\\
        &\|E_\theta\|_{L^\infty(0,T_*;\dot H^{k})}+\|E_\theta e_\theta\|_{L^\infty(0,T_*;\dot H^{k})}\lesssim_{\ep,k} \mu^{k-\frac{1}{2}-\eta}(\mu^{2-s}\nu^{\frac{1}{2}})(\mu^{-1}\nu).
        \end{aligned}
    \end{equation}
\end{prop}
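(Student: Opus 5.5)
The plan is to reduce everything to Luo's estimates \cite[Lemma 3.3 and (3.24)--(3.26)]{Luo24} by a scaling observation. Since $\bar u_r,\bar u_z$ coincide with Luo's approximate solution, and the transport equation \eqref{eq4.11} is linear in $\bar u_\theta$, the swirl $\bar u_\theta$ in \eqref{eq4.12} is exactly $\mu^{s-\frac12-\eta}$ times Luo's swirl. Smoothness, axisymmetry and the fact that $\bar u$ solves \eqref{eq4.13} with the displayed forces then follow as in \cite{Luo24}. The first line of \eqref{eq4.15} for $\bar u_re_r+\bar u_ze_z$ is literally Luo's estimate, and the second line is Luo's swirl bound multiplied by $\mu^{s-\frac12-\eta}$.

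To make this self-contained, I would verify the swirl bound directly from \eqref{eq4.12}. In the variables $(\rho,\varphi)$, each $\partial_r$ or $\partial_z$ falling on $g(\mu\rho)$ gives a factor $\mu$. Each derivative falling on the phase $\ep^2 t\rho^{-1}\mu^{1-s}\nu^{\frac12}$ gives at most $\mu\cdot \ep^2T_*\mu^{2-s}\nu^{\frac12}=\mu\ep^{-N}$, since $\rho\sim\mu^{-1}$ on the support and $T_*=\ep^{-2-N}\mu^{s-2}\nu^{-\frac12}$. Derivatives of $\sin\varphi$-type angular factors cost $\rho^{-1}\sim\mu$. Thus
$$|\nabla^k\bar u_\theta|\lesssim_k \ep^{2-Nk}\mu^{k}\mu^{\frac12-\eta}\nu^{\frac12}\mathbf 1_{\mathrm{supp}}.$$
The support is a torus of volume $\sim \mu^{-2}\nu^{-1}$, which produces the factor $\mu^{-\frac2p}\nu^{-\frac1p}$. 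For the vector swirl $\bar u_\theta e_\theta$, derivatives of $e_\theta$ cost $r^{-1}\sim\nu\ll\mu$, so they are harmless, giving \eqref{eq4.15}. The full $\bar u$ bound is the sum of the two lines, and the swirl line is the smaller one since $\frac12+\eta\ge s$.

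For \eqref{eq4.16}, the part of $E$ not involving $\bar u_\theta$ is Luo's, and Luo's bound gives the factor $(\mu^{2-s}\nu^{\frac12})(\mu^{-1}\nu)$. I would check that $E_r$ is actually nicer than in \cite{Luo24}, because its swirl term $\bar u_\theta^2/r$ is smaller by $\mu^{2(s-\frac12-\eta)}$. For $E_\theta$, each of its three terms is linear in $\bar u_\theta$, so it equals $\mu^{s-\frac12-\eta}$ times Luo's $E_\theta$. This gives the second line, and the $E_\theta e_\theta$ version follows via the same $e_\theta$-derivative argument as above. The dependence on $\ep$ (powers $\ep^{-N(k+2)}$) is absorbed into $\lesssim_{\ep,k}$.

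The only delicate point is bookkeeping: the dissipative term $\Delta\bar u_\theta$ in $E_\theta$ is of size $\mu^{k+2}\cdot\mu^{\frac12-\eta}\nu^{\frac12}\cdot\mu^{-1}\nu^{-\frac12}$ in $\dot H^k$. This must be dominated by $\mu^{k-\frac12-\eta}\mu^{1-s}\nu^{\frac32}$, which requires $\mu^{s}\lesssim\nu^{\frac32}$. That holds because $\nu=\mu^{1-b}$ with $b$ small.
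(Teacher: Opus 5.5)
Your proposal is correct and follows essentially the same route as the paper: it inherits Luo's bounds by noting that $\bar u_\theta$, and hence (by linearity in $\bar u_\theta$) $E_\theta$, is exactly $\mu^{s-\frac12-\eta}$ times Luo's, and it handles $\bar u_\theta e_\theta$ and $E_\theta e_\theta$ via $|\nabla^j e_\theta|\lesssim_j \nu^j\ll\mu^j$ on the support. One small arithmetic slip in your final bookkeeping: comparing $\mu^{k+\frac32-\eta}$ with $\mu^{k+\frac12-\eta-s}\nu^{\frac32}$ requires $\mu^{1+s}\lesssim\nu^{\frac32}$, i.e.\ $s+\frac{3b}{2}\le\frac12$, not the trivially true $\mu^{s}\lesssim\nu^{\frac32}$; the correct condition still holds for $b=\frac{1/2-s}{100}$, and it is exactly where $s<\frac12$ is used.
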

\begin{proof}
In \cite{Luo24}, the approximate solution's  swirl component is
$$\ep^2\mu^{1-s}\nu^{\frac{1}{2}}g(\mu\rho)\sin(\varphi-\ep^2t\rho^{-1}\mu^{1-s}\nu^{\frac{1}{2}}),$$ so our approximate solution's swirl $\bar{u}_{\theta}$ in \eqref{eq4.12} differs by a coefficient $\mu^{s-\frac{1}{2}-\eta}\ll 1$ for $\mu\gg 1$. Note as well that $\bar{u}_{r}$ and $\bar{u}_{z}$ are the same as in \cite{Luo24}. Using these facts, we inherit the estimates in \cite[Proposition 3.4]{Luo24} for $\bar{u}$ and $E$ (denoted $\bar{E}$ in \cite{Luo24}) given by \eqref{eq4.16} and \eqref{eq4.15}. Moreover, the estimates for $\bar{u}_{\theta}$ and $E_{\theta}$ differ from those in \cite[(3.26) and (3.30)]{Luo24} by a multiplicative constant $\mu^{s-\frac{1}{2}-\eta}$ giving the estimates in \eqref{eq4.15}-\eqref{eq4.16} for $\bar{u}_{\theta}$ and $E_{\theta}$.

For estimating $\bar{u}_{\theta}e_{\theta}$, we use $|\nabla^je_{\theta}|\lesssim_j \nu^{j}$ on the support of $\bar{u}_{\theta}$, the product rule, H\"{o}lder's inequality and the estimate for $\bar{u}_{\theta}$. This gives 
\begin{equation}\label{eq4.128}
    \begin{aligned}
        \|\bar u_\theta e_\theta\|_{L^\infty(0,T_*;\dot W^{k,p})}&\lesssim_{k,p} \sum_{j\leq k}\nu^{k-j}\|\bar u_\theta\|_{L^\infty(0,T_*;\dot W^{j,p})}
        \\&\leq \sum_{j\leq k} \mu^{k-\frac{1}{2}-\eta }\varepsilon^{2-Nj}(\mu^{1-\frac{2}{p}}\nu^{\frac{1}{2}-\frac{1}{p}})\lesssim_{k} \varepsilon^{2-Nk} \mu^{k-\frac{1}{2}-\eta}(\mu^{1-\frac{2}{p}}\nu^{\frac{1}{2}-\frac{1}{p}}),
    \end{aligned}
\end{equation} 
where we used $\nu\leq \mu$ and $\varepsilon\in (0,1)$.
The estimates of $\bar u_re_r+\bar u_ze_z$ and $E_{\theta}e_{\theta}$ are similar.
\end{proof}

In the proposition below, we will show that the $\bar u_\theta$ does have the desired norm inflation at the critical time $T_*$.
\begin{prop}\label{prop4.5}
    For $\bar u_\theta$ defined by \eqref{eq4.12}, $\ep\in(0,\frac{1}{2})$, and $T_*$ defined by \eqref{eq4.1}, there holds
    \begin{equation}\label{eq4.34}
        \|\bar u_\theta(\cdot,T_*)\|_{\dot H^{\frac{1}{2}+\eta}}\gtrsim \ep^{-2}.
    \end{equation}
\end{prop}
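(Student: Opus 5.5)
The plan is to work with the explicit formula \eqref{eq4.12} and compute a lower bound on an integer-order derivative. That bound is then transferred to $\dot H^{\frac12+\eta}$ by interpolation \eqref{eq4.122} against an upper bound at a higher derivative order. Write $\Lambda:=\ep^2T_*\mu^{1-s}\nu^{\frac12}=\ep^{-N}\mu^{-1}$. At time $T_*$,
\begin{equation*}
\bar u_\theta(\cdot,T_*)=\ep^2\mu^{\frac12-\eta}\nu^{\frac12}g(\mu\rho)\sin(\varphi-\Lambda\rho^{-1}).
\end{equation*}
On the support $\rho\sim\mu^{-1}$, the phase $\Lambda\rho^{-1}$ has $\rho$-derivative of size $\Lambda\rho^{-2}\sim\ep^{-N}\mu$. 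So the oscillation frequency is $\ep^{-N}\mu\gg\mu$, which is the mechanism behind the inflation.

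\textbf{Step 1 (lower bound at order one).} I would compute $\|\nabla\bar u_\theta(T_*)\|_{L^2}$. The dominant term of $\partial_\rho\bar u_\theta$ is
\begin{equation*}
\ep^2\mu^{\frac12-\eta}\nu^{\frac12}g(\mu\rho)\Lambda\rho^{-2}\cos(\varphi-\Lambda\rho^{-1}).
\end{equation*}
The remaining terms are smaller by a factor of $\ep^{N}$. The volume element is $r\,dr\,dz\approx \nu^{-1}\rho\,d\rho\,d\varphi$, since $r\approx\nu^{-1}$ on the support. Integrating $\cos^2$ over $\varphi\in[0,2\pi]$ gives the positive constant $\pi$, with no cancellation. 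Hence
\begin{equation*}
\|\nabla\bar u_\theta(T_*)\|_{L^2}\gtrsim \ep^{2-N}\mu^{\frac32-\eta}\nu^{\frac12}\cdot\nu^{-\frac12}\mu^{-1}=\ep^{2-N}\mu^{\frac12-\eta},
\end{equation*}
using that $g\not\equiv0$ on $[1,\frac32]$.

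\textbf{Step 2 (upper bounds).} Proposition \ref{prop4.8} with $p=2$ gives
\begin{equation*}
\|\bar u_\theta(T_*)\|_{\dot H^k}\lesssim_k\ep^{2-Nk}\mu^{k-\frac12-\eta}\quad\text{for } k\in\mathbb N,
\end{equation*}
since $\mu^{1-\frac2p}\nu^{\frac12-\frac1p}=1$ when $p=2$. Interpolation \eqref{eq4.122} then gives the same scaling at noninteger orders.

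\textbf{Step 3 (interpolation).} If $\frac12+\eta<1$, I would interpolate $\dot H^1$ between $\dot H^{\frac12+\eta}$ and $\dot H^2$: $\|f\|_{\dot H^1}\le\|f\|_{\dot H^{\frac12+\eta}}^{\theta}\|f\|_{\dot H^2}^{1-\theta}$, where $1=\theta(\frac12+\eta)+2(1-\theta)$. Substituting Steps 1 and 2, the powers of $\mu$ cancel by homogeneity. The powers of $\ep$ give
\begin{equation*}
\|f\|_{\dot H^{\frac12+\eta}}\gtrsim \ep^{\,2-N(\frac12+\eta)}\ge\ep^{2-N}\ge\ep^{-2},
\end{equation*}
since $N\ge100$ and $\ep<\frac12$. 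For the endpoint $\eta=\frac12$, Step 1 already gives $\|\bar u_\theta(T_*)\|_{\dot H^1}\gtrsim\ep^{2-N}\mu^{0}\ge\ep^{-2}$.

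\textbf{Main obstacle.} The delicate step is Step 1. It requires that the lower-order terms are negligible relative to the oscillatory derivative, and that the only cancellation issue is the $\cos^2$ average. The first point holds because $\Lambda\rho^{-2}\gtrsim\ep^{-N}\mu$ dominates $|\partial_\rho g(\mu\rho)|/|g|\sim\mu$ and the $\rho^{-1}\partial_\varphi$ term. To make it rigorous I would bound the remainder in $L^2$ by $\ep^{2}\mu^{\frac12-\eta}$ and absorb it, using that $\ep$ is small.
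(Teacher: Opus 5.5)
Your proposal is correct and follows the paper's route: you take a $\dot H^1$ lower bound $\ep^{2-N}\mu^{\frac12-\eta}$, which the paper imports from Luo's computation and you derive directly from the oscillating phase $\Lambda\rho^{-1}$, and interpolate it against the $\dot H^2$ upper bound of Proposition \ref{prop4.8}, so the powers of $\mu$ cancel and you get $\ep^{2-N(\frac12+\eta)}$. One small slip: for $\eta<\frac12$ the intermediate inequality $\ep^{2-N(\frac12+\eta)}\ge\ep^{2-N}$ goes the wrong way since $\ep<1$, but the conclusion $\ep^{2-N(\frac12+\eta)}\ge\ep^{-2}$ still holds directly because $N(\frac12+\eta)\ge \frac N2\ge 50>4$.
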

\begin{proof}
The proof closely follows  \cite[Lemma 3.5]{Luo24}. As the approximate swirl $\bar{u}_{\theta}$ and inflation norm differ in our setting we provide a sketch. 

As the swirl $\bar{u}_{\theta}$ in \eqref{eq4.12} differs from \cite{Luo24} by a coefficient $\mu^{s-\frac{1}{2}-\eta}$, the same reasoning as in \cite[(3.41)]{Luo24} gives 

\begin{equation*}\label{eq4.101}
    \|\bar u_\theta(\cdot,T_*)\|_{\dot H^1}\gtrsim \ep^{2-N}\mu^{\frac{1}{2}-\eta}.
\end{equation*}
Sobolev interpolation \eqref{eq4.122}, together with the $\dot H^2$ upper bound \eqref{eq4.15}, leads to 
\begin{equation*}\label{eq4.102}
    \|\bar u_\theta(\cdot,T_*)\|_{\dot H^{\frac{1}{2}+\eta}}\geq \|\bar u_\theta(\cdot,T_*)\|_{\dot H^1}^{\frac{3}{2}-\eta}\|\bar u_\theta(\cdot,T_*)\|_{\dot H^2}^{\eta-\frac{1}{2}}\gtrsim \varepsilon^{2-N-N(\eta-\frac{1}{2})}\geq \ep^{-2}.
\end{equation*}
\end{proof}
\begin{rmk}\label{rmkvectorinflation}
As the swirl component \eqref{eq4.12} differs from \cite{Luo24} by the multiplicative coefficient $\mu^{s-\frac{1}{2}-\eta}$, we see that the same reasoning as in \cite[(3.40)]{Luo24} also gives
\begin{equation}\label{vecinflation}
\|\bar{u}_{\theta}(\cdot, T_*)e_{\theta}\|_{\dot{H}^{1}}\geq C \varepsilon^{2-N} \mu^{\frac{1}{2}-\eta},
\end{equation}
for some universal constant $C$. This will be of particular use in the proof of Theorem \ref{thm6}.
\end{rmk}
\subsection{Error estimates}\label{sec4.1.3}

This subsection is devoted to the estimate of the remainder term $w=u-\bar u$, where $u$ is the axisymmetric solution to the Navier-Stokes equation with the initial data $u_0$ defined in \eqref{eq4.4}, and $\bar u$ is the approximate solution \eqref{eq4.10}. Combining \eqref{eq1.1} and \eqref{eq4.103}, $w$ is a solution to the following system
\begin{equation}\label{eq4.23}
    \left\{\begin{array}{l}
         \partial_tw-\Delta w+u\cdot\nabla w+w\cdot\nabla\bar u=\nabla P-E,  \\
         \operatorname{div} w=0,\\
          w|_{t=0}=0.
    \end{array}\right.
\end{equation}
The following proposition gives an estimate for the remainder $w$.
\begin{prop}\label{prop4.3}
    Let $\bar u$ be defined by \eqref{eq4.10}. If the force term $E$ satisfies \eqref{eq4.16}, then there exists a solution $w:\mathbb{R}^3\times(0, T_*]\rightarrow \mathbb{R}^3$ to \eqref{eq4.23} with the following properties. Namely, for each integer $k\in\mathbb{N}$, there exists a sequence $\delta_k=\frac{9+10^{-k}}{10}$ for $k>0$ and $\delta_0=1$, such that
    \begin{equation}\label{eq4.67}
        \begin{aligned}
            \|w(\cdot, t)\|_{\dot{H}^k}\lesssim_{\ep,k}\mu^{k-s}(\mu^{-1}\nu)^{\delta_k},\quad\forall t\leq T_*.
        \end{aligned}
    \end{equation}
    Moreover, there holds the pointwise estimate
    \begin{equation}\label{eq4.125}
        \begin{aligned}
            \|w(\cdot, t)\|_{\dot{W}^{k,\infty}}\lesssim_{\ep,k}\mu^{k+\frac32-s}(\mu^{-1}\nu)^{\delta_{k+3}},\quad\forall t\leq T_*.
        \end{aligned}
    \end{equation}
\end{prop}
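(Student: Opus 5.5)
Proposition \ref{prop4.3} is a remainder estimate of the same type as \cite[Proposition 3.6]{Luo24}, and $\bar u_r,\bar u_z$ coincide with Luo's. The plan is therefore to rerun Luo's energy/bootstrap argument. The only changes are that $\bar u_\theta$ and $E_\theta$ are now \emph{smaller} by the factor $\mu^{s-\frac12-\eta}\le 1$. Every bound on $\bar u$ and $E$ used in \cite{Luo24} is thus still available, by Proposition \ref{prop4.8}.

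\textbf{Local existence and energy inequalities.} First, local existence of a smooth solution $w$ of \eqref{eq4.23} follows from standard $H^k$ theory, since $u_0\in C_c^\infty$ and $E,\bar u$ are smooth. So it suffices to prove \eqref{eq4.67} a priori on $[0,T_*]$, with a continuity argument extending the solution up to $T_*$.

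Next, apply $\nabla^\alpha$ with $|\alpha|=k$ to \eqref{eq4.23}, test with $\nabla^\alpha w$ and use $\operatorname{div}u=\operatorname{div}w=0$ to remove the pressure. This gives
\begin{equation*}
\frac{d}{dt}\|w\|_{\dot H^k}^2+2\|w\|_{\dot H^{k+1}}^2\lesssim_k \Big|\int \nabla^\alpha(u\cdot\nabla w)\cdot\nabla^\alpha w\Big|+\Big|\int \nabla^\alpha(w\cdot\nabla\bar u)\cdot\nabla^\alpha w\Big|+\|E\|_{\dot H^k}\|w\|_{\dot H^k}.
\end{equation*}
Writing $u=\bar u+w$, the term $\bar u\cdot\nabla\nabla^\alpha w$ integrates to zero. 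The remaining commutators are bounded by
\begin{equation*}
\sum_{j\le k}\|\bar u\|_{\dot W^{j+1,\infty}}\|w\|_{\dot H^{k-j}}\|w\|_{\dot H^k}
\end{equation*}
plus the nonlinear terms $\sum_j\|\nabla^j w\|_{L^\infty}\|w\|_{\dot H^{k+1-j}}\|w\|_{\dot H^k}$.

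\textbf{Linear terms ($k=0$).} For $k=0$, \eqref{eq4.15} with $p=\infty$ gives $\|\nabla\bar u\|_{L^\infty}\lesssim \ep^{2-N}\mu^{2-s}\nu^{1/2}$. Hence $T_*\|\nabla\bar u\|_{L^\infty}\lesssim\ep^{-2N}$, which is harmless by Gr\"onwall. The forcing contributes
\begin{equation*}
T_*\|E\|_{L^2}\lesssim_\ep \mu^{-s}(\mu^{-1}\nu),
\end{equation*}
since $T_*\mu^{2-s}\nu^{1/2}\sim_\ep 1$. This yields \eqref{eq4.67} with $\delta_0=1$.

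\textbf{Induction on $k$.} For $k\ge1$ I would induct, using lower-order bounds. The structure $\delta_k\downarrow \frac{9}{10}$ indicates that each step loses a small power of $\mu^{-1}\nu$. This loss comes from interpolating between $\|w\|_{\dot H^{k-1}}$ and a crude high-order bound such as $\|w\|_{\dot H^{K}}\lesssim \mu^{K-s}\ep^{-CK}$. This is how one controls the nonlinear terms $\|\nabla^j w\|_{L^\infty}$ via Sobolev embedding, as in \cite{Luo24}. Since the constants may depend on $\ep$, the losses in $\ep$ from Gr\"onwall are allowed.

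\textbf{Pointwise estimate.} The bound \eqref{eq4.125} then follows from \eqref{eq4.67} by the Gagliardo--Nirenberg/Sobolev inequality
\begin{equation*}
\|\nabla^k w\|_{L^\infty}\lesssim \|w\|_{\dot H^{k+1}}^{1/2}\|w\|_{\dot H^{k+2}}^{1/2},
\end{equation*}
or more crudely by $\dot H^{k+3}$-type interpolation. This gives $\mu^{k+\frac32-s}(\mu^{-1}\nu)^{\delta_{k+2}}$, and $\delta_{k+2}\ge\delta_{k+3}$ suffices since $\mu^{-1}\nu<1$.

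\textbf{Main obstacle.} I expect the main obstacle to be closing the nonlinear bootstrap $w\cdot\nabla w$ at each order without degrading the exponent below $\delta_k$. One needs $T_*\|\nabla w\|_{L^\infty}\ll 1$, which requires the smallness $(\mu^{-1}\nu)^{\delta}$ to beat the factor $\mu^{3/2}\nu^{-1/2}T_*^{-1}$-type losses. I would handle this exactly as in \cite{Luo24}: choose $\mu$ large depending on $\ep$ and use $\nu=\mu^{1-b}$ with $b$ small. Then $(\mu^{-1}\nu)^{9/10}=\mu^{-9b/10}$ compensates the pure powers of $\ep$.
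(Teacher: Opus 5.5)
Your treatment of \eqref{eq4.67} matches the paper's. Both inherit \cite[Lemmas 4.2--4.3]{Luo24}, since only upper bounds on $\bar u$ and $E$ enter, and by Proposition \ref{prop4.8} these are unchanged or improved. Your $k=0$ Gr\"onwall computation is correct: $T_*\|\nabla\bar u\|_{L^\infty}\lesssim\ep^{-2N}$ and $T_*\|E\|_{L^2}\lesssim_\ep\mu^{-s}(\mu^{-1}\nu)$. Your account of where the loss $\delta_{k-1}-\delta_k$ comes from for $k\ge1$ is only a heuristic. That is acceptable here, because the paper does not reprove this step either.

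Where you genuinely differ is \eqref{eq4.125}.

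\emph{The paper's route.} It interpolates to $\dot H^{k+\frac32+\delta}$ and adds $\|w\|_{L^2}$ so as to use the inhomogeneous embedding $H^{k+\frac32+\delta}\hookrightarrow\dot W^{k,\infty}$. It then absorbs the extra factor $\mu^{\delta}$ by taking $\delta<b(\delta_{k+2}-\delta_{k+3})$ and using $\mu^{-1}\nu=\mu^{-b}$. This absorption is exactly why the statement carries $\delta_{k+3}$ rather than $\delta_{k+2}$.

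\emph{Your route.} You use the Agmon-type inequality $\|f\|_{L^\infty(\mathbb{R}^3)}\lesssim\|f\|_{\dot H^1}^{1/2}\|f\|_{\dot H^2}^{1/2}$, which follows by splitting $\int|\hat f|\,d\xi$ at $|\xi|=R$ and optimizing in $R$. Applied to $f=\nabla^k w$, it straddles the critical index $\frac32$ exactly. So there is no $\mu^{\delta}$ loss and no low-frequency $L^2$ term. It gives the exponent $\tfrac12(\delta_{k+1}+\delta_{k+2})\ge\delta_{k+2}\ge\delta_{k+3}$, a slightly stronger bound than stated, and it does not use the relation $\nu=\mu^{1-b}$.

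The paper's route needs only the textbook Sobolev embedding, at the cost of the index shift; yours is sharper and cleaner but relies on the scale-invariant Gagliardo--Nirenberg bound.
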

\begin{proof}
For a detailed proof of the $L^2$ estimate \eqref{eq4.67}, we refer to  \cite[Lemma 4.2-4.3]{Luo24}. We emphasize that the only difference is that our swirl data $u_{0\theta}$ is smaller, so the whole proof can be inherited from \cite{Luo24}, since only an upper bound of data is used.

Following \cite{Luo24}, one can use \eqref{eq4.67} and the Sobolev embedding theorem to show \eqref{eq4.125}.
In particular, by the interpolation inequality \eqref{eq4.122} and \eqref{eq4.67}, 
    \begin{equation*}\label{eq4.123}
       \|w(\cdot,t)\|_{\dot H^{k+\frac32+\delta}}\lesssim_{\ep,k,\delta}\mu^{k+\frac32+\delta-s}(\mu^{-1}\nu)^{\delta_{k+2}},\quad\forall \delta\in(0,\tfrac{1}{2}),\  t\leq T_*. 
    \end{equation*} By taking $\delta<b(\delta_{k+2}-\delta_{k+3})$ small enough, we have
    \begin{equation*}
        \begin{aligned}
            \|w(\cdot,t)\|_{H^{k+\frac{3}{2}+\delta}}&\lesssim_{k,\delta} \|w(\cdot,t)\|_{L^2}+\|w(\cdot,t)\|_{\dot H^{k+\frac{3}{2}+\delta}}\\
            &\lesssim_{\ep,k,\delta} \mu^{k+\frac{3}{2}+\delta-s}(\mu^{-1}\nu)^{\delta_{k+2}}\lesssim\mu^{k+\frac{3}{2}-s}(\mu^{-1}\nu)^{\delta_{k+3}}.
        \end{aligned}
    \end{equation*}
    Together with the Sobolev embedding $H^{k+\frac{3}{2}+\delta}\hookrightarrow \dot W^{k,\infty}$ this gives \eqref{eq4.125}.
\end{proof}
\begin{rmk}
    We claim that the remainder term $w$ is also axisymmetric. 
    Since $u_0$ is axisymmetric and  $u\in C([0,T_*];H^{1})\hookrightarrow C([0,T_*];L^6)$ which is a uniqueness class for the 3D Navier-Stokes equations \cite[Theorem (3.3)]{FJR1972}, we see that $u$ must be invariant with respect to rotations about the z-axis. Since the approximate solution $\bar u$ is also axisymmetric, so is $w$.

\end{rmk}

Now let us prove a better estimate for the vector swirl of the remainder.
Denote
\begin{equation}\label{eq4.111}
\begin{aligned}
    &W_\theta:=w_\theta e_\theta,\quad \bar U_\theta:=\bar u_\theta e_\theta,\quad U_{\neq\theta}:=u_re_r+u_ze_z,\quad \bar U_{\neq\theta}:=\bar u_re_r+\bar u_ze_z,\\
    &  W_{\neq\theta}:=w_re_r+w_ze_z,\quad \mathcal{E}_\theta:=-E_\theta e_\theta,
\end{aligned}
\end{equation}
then $W_\theta$ satisfies
\begin{equation}\label{eq4.70}
    \left\{\begin{array}{l}
         \partial_tW_\theta-\Delta W_\theta +U_{\neq\theta}\cdot\nabla W_\theta+W_\theta\cdot\nabla U_{\neq\theta}=\mathcal{E}_\theta-\bar U_\theta\cdot\nabla W_{\neq\theta}-W_{\neq\theta}\cdot\nabla \bar U_\theta:=\tilde{\mathcal{E}}_\theta,\\
         W_\theta|_{t=0}=0,
    \end{array}\right.
\end{equation}
 where $\tilde{\mathcal E}_\theta$ is supported away from $\{r=0\}$. By \eqref{matrixnorm}, $W_{\neq\theta}$ also satisfies the bound in \eqref{eq4.67}
\begin{equation}\label{eq4.71}
    \|W_{\neq\theta}(\cdot,t)\|_{\dot{H}^k}\lesssim \|w(\cdot,t)\|_{\dot{H}^k}\lesssim_{\ep,k}\mu^{k-s}(\mu^{-1}\nu)^{\delta_k}\leq\mu^{k-s}(\mu^{-1}\nu)^{0.9},\quad\forall t\leq T_*.
\end{equation}
 Here we use $0<\mu^{-1}\nu<1$ and the fact that the sequence $\delta_k$
is decreasing.

To obtain the higher order energy estimate, we take $\nabla^\alpha$ on both sides of \eqref{eq4.70} for $|\alpha|=k$, and then take the inner product in $L^2$ with $\nabla^\alpha W_{\theta}$. Since $\operatorname{div}(U_{\neq\theta})=\operatorname{div}(W_\theta)=0$, the term $U_{\neq\theta}\cdot \nabla \nabla^\alpha W_{\theta}$ vanishes in the energy estimate. From the product rule, summation over $|\alpha|=k$ and H\"{o}lder's inequality, we get
\begin{equation}\label{eq4.136}
    \begin{aligned}
        &\frac{d}{dt}\| W_\theta(\cdot,t)\|_{\dot{H}^k}^2+\| W_\theta(\cdot,t)\|_{\dot{H}^{k+1}}^2\lesssim_k \|U_{\neq\theta}(\cdot,t)\|_{\dot{W}^{1,\infty}}\|W_{\theta}(\cdot,t)\|_{\dot H^k}^2\\
    &+\|W_{\theta}(\cdot,t)\|_{\dot H^k}\sum_{2\leq j, j+m=k+1}\| W_\theta(\cdot,t)\|_{\dot{H}^m}\|U_{\neq\theta}(\cdot,t)\|_{\dot{W}^{j,\infty}}+\| W_{\theta}(\cdot,t)\|_{\dot{H}^k}\| \tilde{\mathcal{E}}_{\theta}(\cdot,t)\|_{\dot{H}^k}.
    \end{aligned}
\end{equation}
Note that \eqref{eq4.15}, \eqref{eq4.125} and \eqref{matrixnorm} imply 
\begin{equation}\label{eq4.77}
    \|U_{\neq\theta}\|_{L^\infty(0,T_*;\dot{W}^{k,\infty})}\lesssim_{\ep,k}\mu^{k+1-s}\nu^{\frac{1}{2}},\quad k\in\mathbb{N}.
\end{equation}
By \eqref{eq4.77}, the first two terms on the right hand side of \eqref{eq4.136} can be bounded by
\begin{equation}\label{leadingorderest}
    C(\ep,k)\left(\mu^{2-s}\nu^{\frac{1}{2}}\|W_{\theta}(\cdot,t)\|_{\dot H^k}^2+\|W_{\theta}(\cdot,t)\|_{\dot H^k}\left(\sum_{j<k}\mu^{k-j+2-s}\nu^{\frac{1}{2}}\|W_{\theta}(\cdot,t)\|_{\dot H^j}\right)\right),\quad\forall t\leq T_*,
\end{equation}
where the constants $C(\ep,k)$ comes from the constant in \eqref{eq4.71} and \eqref{eq4.77}.
Furthermore, by the product rule, \eqref{eq4.15} and
\eqref{eq4.71} imply
\begin{equation}\label{eq4.31}
\begin{aligned}
&\|W_{\neq\theta}\cdot\nabla\bar U_\theta\|_{L^\infty(0,T_*;\dot{H}^k)}
+\|\bar U_\theta\cdot\nabla W_{\neq\theta}\|_{L^\infty(0,T_*;\dot{H}^k)}\\
&\lesssim_k\sum_{j+m=k+1}
\|W_{\neq\theta}\|_{L^\infty(0,T_*;\dot{H}^j)}\|\bar U_\theta\|_{L^\infty(0,T_*;\dot{W}^{m,\infty})}\lesssim_{k,\ep}\mu^{k+\frac32-\eta-s}\nu^{\frac12}
(\mu^{-1}\nu)^{0.9}.
\end{aligned}
\end{equation}
On the other hand, \eqref{eq4.16} and \eqref{eq4.31} give
\begin{equation*}\label{eq4.78}
\|\tilde{\mathcal E}_\theta(\cdot,t)\|_{\dot H^k}
\lesssim_{k,\ep}\mu^{k+\frac32-\eta-s}\nu^{\frac12}
(\mu^{-1}\nu)^{0.9},\qquad \forall t\leq T_*,
\end{equation*}
Substituting this and \eqref{leadingorderest} into \eqref{eq4.136} then yields

\begin{equation}\label{eq4.130}
  \begin{aligned}
      &\frac{d}{dt}\| W_{\theta}(\cdot,t)\|_{\dot H^k} ^2+\|W_\theta(\cdot,t)\|_{\dot H^{k+1}}^2\lesssim_{\ep,k} \mu^{2-s}\nu^{\frac{1}{2}}\|W_{\theta}(\cdot,t)\|_{\dot H^k}^2\\
      &\quad+\|W_\theta(\cdot,t)\|_{\dot H^k}\sum_{j< k}  \mu^{k-j+2-s}\nu^{\frac{1}{2}}\|W_{\theta}(\cdot,t)\|_{\dot H^j}+\mu^{k+\frac32-\eta-s}\nu^{\frac12}
(\mu^{-1}\nu)^{0.9}\|W_{\theta}(\cdot,t)\|_{\dot H^k},\quad\forall t\leq T_*.
  \end{aligned}
\end{equation}
In particular, if $k=0$, then the first term in the second line vanishes since there is no $j<k$.

We will now prove the following bootstrapped quantitative regularity for $W_\theta$.
\begin{lem}\label{lem4.4}
    Let $W_\theta$ to be the solution to \eqref{eq4.70} on $[0,T_*]$, then
    \begin{equation}\label{eq4.24}
        \| W_\theta(\cdot,t)\|_{\dot{H}^k}\lesssim_{\ep,k}\mu^{k-\frac{1}{2}-\eta}(\mu^{-1}\nu)^{0.9},\quad\forall t\leq T_*,\quad \forall k\in\mathbb{N}.
    \end{equation}
\end{lem}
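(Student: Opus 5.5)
We argue by induction on $k$, using the differential inequality \eqref{eq4.130} together with Grönwall's inequality on $[0,T_*]$. The key quantitative fact is that the amplification factor over the time interval is harmless. Indeed, by \eqref{eq4.1},
\begin{equation*}
T_*\,\mu^{2-s}\nu^{\frac12}=\ep^{-2-N},
\end{equation*}
which is a constant depending only on $\ep$ and can be absorbed into $\lesssim_{\ep,k}$. The source term in \eqref{eq4.130} carries the factor $\mu^{k+\frac32-\eta-s}\nu^{\frac12}(\mu^{-1}\nu)^{0.9}$. Integrating it over $[0,T_*]$ produces
\begin{equation*}
T_*\,\mu^{k+\frac32-\eta-s}\nu^{\frac12}(\mu^{-1}\nu)^{0.9}=\ep^{-2-N}\mu^{k-\frac12-\eta}(\mu^{-1}\nu)^{0.9},
\end{equation*}
which is exactly the target size in \eqref{eq4.24}.

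For the base case $k=0$, divide \eqref{eq4.130} by $\|W_\theta\|_{L^2}$ (or work with $\sqrt{\|W_\theta\|_{L^2}^2+\delta}$ and let $\delta\to0$ to avoid the zero set). Dropping the dissipative term gives
\begin{equation*}
\frac{d}{dt}\|W_\theta\|_{L^2}\lesssim_\ep \mu^{2-s}\nu^{\frac12}\|W_\theta\|_{L^2}+\mu^{\frac32-\eta-s}\nu^{\frac12}(\mu^{-1}\nu)^{0.9}.
\end{equation*}
Since $W_\theta(0)=0$, Grönwall yields
\begin{equation*}
\|W_\theta(t)\|_{L^2}\le e^{C T_*\mu^{2-s}\nu^{1/2}}\,T_*\,\mu^{\frac32-\eta-s}\nu^{\frac12}(\mu^{-1}\nu)^{0.9}\lesssim_\ep \mu^{-\frac12-\eta}(\mu^{-1}\nu)^{0.9}.
\end{equation*}

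For the inductive step, assume \eqref{eq4.24} holds for all $j<k$. The lower-order coupling term in \eqref{eq4.130} is then bounded by
\begin{equation*}
\mu^{k-j+2-s}\nu^{\frac12}\cdot\mu^{j-\frac12-\eta}(\mu^{-1}\nu)^{0.9}=\mu^{k+\frac32-\eta-s}\nu^{\frac12}(\mu^{-1}\nu)^{0.9},
\end{equation*}
which is the same size as the source term. After dividing by $\|W_\theta\|_{\dot H^k}$ we therefore obtain
\begin{equation*}
\frac{d}{dt}\|W_\theta\|_{\dot H^k}\lesssim_{\ep,k}\mu^{2-s}\nu^{\frac12}\|W_\theta\|_{\dot H^k}+\mu^{k+\frac32-\eta-s}\nu^{\frac12}(\mu^{-1}\nu)^{0.9}.
\end{equation*}
Grönwall with zero initial data again gives \eqref{eq4.24} at level $k$, with a constant growing in $k$ but independent of $\mu$.

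The delicate point is not this computation but justifying \eqref{eq4.130} itself. This requires that $W_\theta$ has enough regularity for the energy identity, which follows from the smoothness of $u$ and $\bar u$ on $[0,T_*]$ (Proposition \ref{prop4.3} gives all $\dot H^k$ norms finite). It also requires that the bounds \eqref{eq4.77} and \eqref{eq4.31} hold uniformly in time. Beyond that, we must check carefully that every constant depends only on $\ep$ and $k$, never on $\mu$. In particular the exponential factor must remain $e^{C(\ep,k)\ep^{-2-N}}$. This holds precisely because the leading-order coefficient in \eqref{eq4.130} is $\mu^{2-s}\nu^{\frac12}$ with no extra loss in $\mu$, which is where \eqref{eq4.77} was used.
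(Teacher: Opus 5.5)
Your proposal is correct and follows essentially the same route as the paper: a Gr\"onwall estimate from \eqref{eq4.130} at $k=0$, then induction on $k$, where the lower-order coupling term, bounded by the inductive hypothesis, has the same size $\mu^{k+\frac32-\eta-s}\nu^{\frac12}(\mu^{-1}\nu)^{0.9}$ as the source, and all amplification factors stay bounded because $T_*\mu^{2-s}\nu^{\frac12}=\ep^{-2-N}$. Your extra care (dividing by the norm via the $\delta$-regularization, and checking that constants depend only on $\ep$ and $k$) spells out what the paper leaves to the standard Gr\"onwall lemma it quotes.
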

\begin{proof}

   First we will exhibit an $L^2$ energy estimate. By taking $k=0$ in \eqref{eq4.130}, we have
    \begin{equation}\label{eq4.32}
        \partial_t\|W_\theta(\cdot,t)\|_{L^2}^2\lesssim_\ep \mu^{2-s}\nu^{\frac{1}{2}}\|W_\theta(\cdot,t)\|_{L^2}^2+\mu^{\frac{3}{2}-s-\eta}\nu^{\frac{1}{2}}(\mu^{-1}\nu)^{0.9}\|W_\theta(\cdot,t)\|_{L^2},\quad\forall t\leq T_*.
    \end{equation}
    By our choice of $T_*$ in \eqref{eq4.1} and Gronwall inequality (see Lemma 3.3 of \cite{BCD2011} for example), \eqref{eq4.32} implies
    \begin{equation}\label{eq4.33}
        \|W_\theta\|_{L^\infty (0,T_*;L^2)}\lesssim_{\ep} \mu^{-\frac{1}{2}-\eta}(\mu^{-1}\nu)^{0.9}.
    \end{equation}

   Now we are in position to derive the $\dot H^k$ estimate \eqref{eq4.24} by induction.
   The case $k=0$ has been proved in \eqref{eq4.33}. Assume \eqref{eq4.24} holds for any $k\leq K$, then \eqref{eq4.130}  implies 
   \begin{equation*}\label{eqinductivesobolevenergy}
       \begin{aligned}
           \frac{d}{dt}\|W_\theta(\cdot,t)\|_{\dot H^{K+1}}^2\lesssim_{\ep,K}\mu^{2-s}\nu^{\frac{1}{2}}\|W_{\theta}(\cdot,t)\|_{\dot H^{K+1}}^2+\|W_{\theta}(\cdot,t)\|_{\dot H^{K+1}}\mu^{K+\frac{5}{2}-\eta-s}\nu^{\frac{1}{2}}(\mu^{-1}\nu)^{0.9},
       \end{aligned}
   \end{equation*}
   and Gronwall's inequality gives
   \begin{equation*}\label{inductivefinalbound}
       \|W_\theta\|_{L^\infty (0,T_*;\dot H^{K+1})}\lesssim_{\ep,K} \mu^{K+\frac{1}{2}-\eta}(\mu^{-1}\nu)^{0.9}=\mu^{K+1-\frac{1}{2}-\eta}(\mu^{-1}\nu)^{0.9}.
   \end{equation*}
   This completes the proof of \eqref{eq4.24} by induction. \end{proof}
\subsection{Completion of the proof of Theorem \ref{thm7}}
\begin{proof}[Proof of Theorem \ref{thm7}]
 Note that for any axisymmetric function $f(r,z)$, if we denote $F(r,\theta,z)=f(r,z)e_\theta$, then there holds
\begin{equation*}
    \|f\|_{L^2}=\|F\|_{L^2},\quad \|\nabla F\|_{L^2}^2=\|\nabla f\|_{L^2}^2+\|\frac{f}{r}\|_{L^2}^2,
\end{equation*}
so Lemma \ref{lem4.4} also implies that
\begin{equation*}
    \|w_\theta(\cdot,t)\|_{L^2}\lesssim_{\ep}\mu^{-\frac{1}{2}-\eta}(\mu^{-1}\nu)^{0.9},\quad \| w_\theta(\cdot,t)\|_{\dot{H}^1}\lesssim_{\ep}\mu^{\frac{1}{2}-\eta}(\mu^{-1}\nu)^{0.9},\quad\forall t\leq T_*.
\end{equation*}
    So Lemma \ref{lem4.4} and Sobolev interpolation \eqref{eq4.122} imply
    \begin{equation*}
        \|w_\theta(\cdot,t)\|_{\dot H^{\frac{1}{2}+\eta}}\leq \|w_{\theta}(\cdot,t)\|_{L^2}^{\frac{1}{2}-\eta}\|w_{\theta}(\cdot,t)\|_{\dot{H}^1}^{\frac{1}{2}+\eta}\lesssim_{\ep} (\mu^{-1}\nu)^{0.9}.
    \end{equation*}
    Combining this with the lower bound of $\bar u_\theta$ in \eqref{eq4.34}, if we take $\mu$ large enough with respect to $\ep$, there holds
    \begin{equation*}
        \|u_\theta (\cdot,T_*)\|_{\dot H^{\frac{1}{2}+\eta}}\geq \|\bar u_\theta(\cdot,T_*)\|_{\dot H^{\frac{1}{2}+\eta}}-\|w_\theta(\cdot,T_*)\|_{\dot H^{\frac{1}{2}+\eta}}\geq C_2 \ep^{-2},
    \end{equation*}
  for some universal constant $C_2$. This together with the smallness condition of initial data \eqref{eq4.7} completes the proof of Theorem \ref{thm7} by taking $\ep\leq \min\{C_2,C_1^{-1}\}$ small enough.
\end{proof}

\section{Euler case}\label{sec4.2}
\subsection{Euler setting}
In the Euler case, we also use the coordinates and the $(u_{0r}, u_{0z})$ profile introduced in \cite{Luo24}. For this case, as in \cite{Luo24} fix $s\in(0,\frac{5}{2})$ and $b=\frac{\frac{5}{2}-s}{100}\in(0,\frac{1}{40})$. For $\mu\gg1$, we still use the parameters from \cite{Luo24}
\begin{equation}\label{eq4.43}
    \nu=\mu^{1-b}\ll \mu,\quad T_*=\ep^{-2-N}\mu^{s-2}\nu^{-\frac{1}{2}},\quad N=\max\{\frac{10}{s},100\}
\end{equation}
as in \eqref{eq4.1}. We also use the shifted $(r,z)$ space variables $(\rho,\varphi) $ as in \eqref{eq4.2}. For this case, we will choose the initial data with a different scaling for the swirl $u_{0\theta}$. Namely,  $u_0=u_{0r}e_r+u_{0\theta}e_\theta+u_{0z}e_z+u_ce_z$ with 
\begin{equation}\label{eq4.44}
    \begin{aligned}
    &u_{0\theta}=\ep^2\mu^{-2}\nu^{\frac{1}{2}}g(\mu\rho)\sin\varphi,\\
        &u_{0r}=-\ep^2\mu^{1-s}\nu^{\frac{1}{2}}f'(\mu\rho)\partial_z\rho,\\
        &u_{0z}=\ep^2\mu^{1-s}\nu^{\frac{1}{2}}f'(\mu\rho)\partial_r\rho,\\
        &u_c=\ep^2\mu^{-s}\nu^{\frac{1}{2}}\frac{f(\mu\rho)}{r},
    \end{aligned}
\end{equation}
As in Proposition \ref{prop4.1} (taking $\eta=\frac52$), the bounds
$|\nabla^j e_\theta|\lesssim_j\nu^j$ on the support of the initial data and
$\nu\ll\mu$ give
\begin{equation}\label{eq4.113}
\|u_{0\theta}\|_{\dot H^{3}}+\|u_{0\theta}e_\theta\|_{\dot H^{3}}\leq C_3\ep^2,
\end{equation}
for some universal constant $C_3$. Based on \cite{Luo24} we use the same approximate solution $\bar u$ for the Navier-Stokes case with $\eta=\frac{5}{2}$. Namely, we consider $\bar{u}$ with $\bar u_r=u_{0r}$, $\bar u_z=u_{0z}+u_c$, and with the swirl component $\bar u_\theta$ defined by
\begin{equation*}\label{eq4.45}
    \bar u_\theta=\ep^2\mu^{-2}\nu^{\frac{1}{2}}g(\mu\rho)\sin(\varphi-\ep^2t\rho^{-1}\mu^{1-s}\nu^{\frac{1}{2}}).
\end{equation*}
The estimates for $\bar u_\theta$ are then the same as in Proposition \ref{prop4.8} with $\eta=\tfrac{5}{2}.$ In particular, 
\begin{equation}\label{Eulerswirlapproximate}
\|\bar u_\theta\|_{L^\infty(0,T_*;\dot W^{k,p})}+\|\bar u_\theta e_{\theta}\|_{L^\infty(0,T_*;\dot W^{k,p})}\lesssim_{k,p} \ep^{2-Nk} \mu^{k-3}(\mu^{1-\frac{2}{p}}\nu^{\frac{1}{2}-\frac{1}{p}}).
\end{equation}
Moreover, from Remark \ref{rmkvectorinflation} with $\eta=\frac{5}{2}$ we have
\begin{equation*}\label{eq4.107}
    \|\bar{u}_\theta(\cdot,T_*)e_\theta\|_{\dot H^{1}}\geq C \varepsilon^{2-N} \mu^{-2},
\end{equation*}
for some universal constant $C$.
    
    From this, \eqref{Eulerswirlapproximate} and the interpolation of Sobolev spaces \eqref{eq4.122}, we obtain
    \begin{equation}\label{eq4.50}
    \|\bar u_\theta(\cdot,T_*)e_\theta\|_{\dot H^{3}}\geq \|\bar u_\theta(\cdot,T_*)e_\theta\|_{\dot H^1}^{3}\|\bar u_\theta(\cdot,T_*)e_\theta\|_{L^2}^{-2} \gtrsim  \ep^{-2}.
    \end{equation}
Let the force term $F=F_re_r+F_\theta e_\theta+F_ze_z$ be defined by
\begin{equation*}\label{eq4.46}
     \left\{\begin{array}{l}
             F_r=u_c\partial_zu_{0r}-\frac{1}{r}\bar u_\theta^2,  \\
             F_\theta=u_c\partial_z\bar u_\theta+\frac{1}{r}\bar u_\theta u_{0r}, \\
             F_z=u_{0r}\partial_ru_c+u_{0z}\partial_zu_c+u_c\partial_zu_c+u_c\partial_zu_{0z},
        \end{array}\right.
\end{equation*}
with respect to the new data \eqref{eq4.44}. As in \cite{Luo24} the approximate solution $\bar u$ satisfies
\begin{equation*}
    \left\{\begin{array}{l}
         \partial_t\bar u+\bar u\cdot\nabla \bar u=F+\nabla P,  \\
          \operatorname{div}\bar u=0,\\
          \bar u|_{t=0}=\bar u_0.
    \end{array}\right.
\end{equation*}
Note that $F$ is compactly supported on a ring away from the $z-$axis, and $\operatorname{dist}(\operatorname{Supp}(F),\{r=0\})\sim \nu^{-1}$.

 The estimates for $F$ and the approximate solution are then the same as in Proposition \ref{prop4.8} with $\eta=\tfrac{5}{2}.$ This is also the case for the remainder estimates for $w=u-\bar{u}$ in Proposition \ref{prop4.3} taken from \cite{Luo24}. As diffusion plays no role in the estimate in Lemma \ref{lem4.4}, we obtain the same estimate \eqref{eq4.24} for $W_{\theta}:=w_{\theta}e_{\theta}$ with $\eta=\frac{5}{2}$. Namely, 
 \begin{equation}\label{inductivefinalboundEuler}
       \|W_\theta\|_{L^\infty (0,T_*;\dot H^{k})}\lesssim_{\ep,k} \mu^{k-3}(\mu^{-1}\nu)^{0.9}.
   \end{equation}
   By the parameters chosen in \eqref{eq4.43}, we can take $\mu$ large enough depending only on $\ep$, such that \eqref{inductivefinalboundEuler} gives 
\begin{equation*}
    \|W_{\theta}(\cdot,T_*)\|_{\dot H^3}\leq C({\ep}) (\mu^{-1}\nu)^{0.9}\leq \ep.
\end{equation*}
 Since $u=\bar u+w$, and thanks to the lower bound \eqref{eq4.50}, the swirl component $u_{\theta}$ satisfies
\begin{equation*}
    \|u_\theta e_\theta(\cdot,T_*)\|_{\dot H^3}\geq \|\bar u_\theta e_\theta(\cdot,T_*)\|_{\dot H^3}-\|W_\theta(\cdot,T_*)\|_{\dot H^3}\geq C_4 \ep^{-2},
\end{equation*}
for some universal constant $C_4$. This together with \eqref{eq4.113} completes the proof of Theorem \ref{thm6} by taking $\ep\leq \min\{C_4,C_3^{-1}\}$ small enough.

\section*{Acknowledgments}

T.B. and R.H. are supported by an EPSRC New Investigator Award UKRI096 ‘Dynamics and regularity criteria for nonlinear incompressible partial differential equations'. T.B. wishes to thank Xiaoyutao Luo for a discussion about \cite{Luo24}.

\section*{Data availability statement}
We do not analyse or generate any datasets, because our work proceeds within a theoretical and mathematical approach.

\section*{AI Declaration}
The ideas in this paper are human generated mostly originating from late 2025. In August 2026,  ChatGPT 5.6 reviewed a draft, addressing typos and suggesting Cartesian variables form for \eqref{eq4.70}. The paper was written entirely by humans.

\end{document}